\def\cleardoublepage{\clearpage\if@twoside \ifodd\c@page\else
  \hbox{}
  \thispagestyle{empty}
  \newpage
  \if@twocolumn\hbox{}\newpage\fi\fi\fi}
\theoremstyle{plain} 
\newtheorem{theorem}{Theorem}[section]
\newtheorem*{teo}{Theorem}
\newtheorem{corollary}[theorem]{Corollary}
\newtheorem{proposition}[theorem]{Proposition}
\newtheorem*{prop}{Proposition}
\newtheorem{lemma}[theorem]{Lemma}
\newtheorem*{lema}{Lemma}
\theoremstyle{remark}
\newtheorem*{remark}{Remark}
\theoremstyle{definition}
\newtheorem*{definition}{Definition}
\newtheorem*{notation}{Notation}
\newtheorem{example}{Example}
\newtheorem*{assumption}{Assumptions}
\newcommand{\diamantej}{\underset{j}{\diamond}}
\newcommand{\diamantei}{\underset{i}{\diamond}}
\newcommand{\tensorQ}{\underset{kQ_0}{\otimes} }
\newcommand{\gl}{{\mathfrak{g} \mathfrak{l}}}
\newcommand{\g}{{\mathfrak{g}}}
\newcommand{\matrizDcero}{\scalebox{0.7}{$\begin{pmatrix} 0&0\\D_0&0 \end{pmatrix}$}}
\newcommand{\matrizDuno}{\scalebox{0.7}
{$\begin{pmatrix} 0&0\\ D_1&0\end{pmatrix}$}}
\title{On the semisimplicity of the outer derivations of monomial algebras.}
\author{Selene S{\'a}nchez-Flores}
\address{Mathematisches Institut, Universität zu Köln, Weyertal 86-90, 50931 Köln.}
\email{ssanchez@math.uni-koeln.de}
\date{}
\begin{document}
\maketitle
\begin{abstract}
We show that the Hochschild cohomology 
of a monomial algebra over a field of characteristic zero 
vanishes from degree two    
if the first Hochschild cohomology 
is semisimple as a Lie algebra.
We also prove that first Hochschild cohomology of 
a radical square zero algebra 
is reductive as a Lie algebra. 
In the case of the multiple loops quiver,  
we obtain the Lie algebra of square matrices of size equal to the number of loops. 
\end{abstract}
$\quad$ \\
\noindent {{\small {\bf Keywords:} Hochschild cohomology, outer derivations, Gerstenhaber bracket, mono${}$mial algebra, radical square zero algebra.}}
\section{Introduction.}
Let $A$ be an associative unital $k$-algebra where $k$ is a field.    
We denote by $HH^n(A)$ the Hochschild cohomology vector space in degree $n$ of 
$A$ with coefficients in the bimodule $A$, i.e.
$$
HH^n(A)=HH^n(A,A)=Ext^n_{A^e}(A,A)
$$
where $A^e=A \otimes A^{op}$ is the enveloping algebra of $A$. 
It is well known that 
$HH^1(A)$ is the vector space of outer derivations of $A$. 
Hence $HH^1(A)$ has a Lie algebra structure given by the commutator bracket. 
In \cite{gersten}, Gerstenhaber defined a bracket 
which generalizes the commutator. 
The Gerstenhaber bracket induces a graded Lie algebra structure on $HH^{*+1}(A)$. 
Therefore, $HH^n(A)$ 
is a Lie module over the Lie algebra $HH^1(A)$. 

In the present paper, we consider 
the Lie algebra structure on 
$HH^1(A)$ where $A$ is a finite dimensional monomial algebra over a field of characteristic zero. 
A {\it{monomial algebra}} is the quotient of a path algebra of a quiver $Q$ 
by an ideal generated by a set of paths of length at least two. 
In \cite{happel} and \cite{Bardzell}, 
a minimal projective resolution of $A$ over its enveloping algebra 
is described in terms of the combinatorics of the quiver. 
This resolution allows to compute the Hochschild cohomology. 
Moreover, it enables to study the Lie algebra of outer derivations space with combinatorial tools. 

The Lie algebra structure of $HH^1(A)$ where $A$ is a monomial algebra has been already studied in \cite{strametz}.   
In particular, Strametz provides sufficient and necessary conditions to the combinatorial data of $A$ in order to guarantee the semisimplicity on $HH^1(A)$. In this case, the semisimple Lie algebra obtained is isomorphic to a direct product of Lie algebras of type $A_n$ (i.e. trace zero square matrices). On the other hand, finite dimensional mo${}$dules over these Lie algebras are well known from representation theory. So, a natural question arise: {\it{What is the description of $HH^n(A)$ as a Lie module over the Lie algebra $HH^1(A)$, when this one is semisimple?}}

In this article, we answer the above question for monomial algebras over a field of characteristic zero: {\it{the Hochschild cohomology from degree two vanishes if the first Hochschild cohomology is semisimple as a Lie algebra}}. 
This answer brings out some other questions concerning the cohomology structure. 
For in${}$stance, the pursuit of examples where the Lie module structure of the Hochschild cohomology groups is not trivial.  
This has led us to consider the case where the Lie algebra in the first degree is not semisimple. 

Our first step was to study the case of radical square zero monomial algebras,  
since its Hochschild cohomology has been described explicitly in \cite{cibils}. 
This description allowed us to give an example where the Lie module structure is not trivial.  
In \cite{sele}, we described the Lie module structure of Hochschild cohomology vector spaces 
of the radical square zero algebra $A$ associated to the two loops quiver. 
In fact, we showed that $HH^1(A)$ is isomorphic to $gl_2\, k$, the Lie algebra of square matrices of size two 
and then we describe $HH^n(A)$ as a Lie module over $gl_2 \,k$ 
by giving the decomposition into a direct sum of irreducibles modules over $sl_2 \, k$, 
the trace zero matrices.  

In the present paper, we prove a more general result concerning the first Hochschild cohomology: 
{\it{The Lie algebra of the outer derivations of a radical square zero monomial algebra is reductive}}
(i.e. it is the direct product of a semisimple Lie algebra 
and an abelian Lie algebra). 
In the particular case of the multiple loops quiver, 
we obtain that $HH^1(A)$ is the Lie algebra of square matrices of size equals to the number of loops.

This paper is organized in three sections. 
In the first section, we begin by recalling the combinatorial description 
of both: the first Hochschild cohomo${}$logy vector space  
and the commutator bracket. 
The description of the Hochschild cohomology is obtained from  
the complex induced after applying the 
functor ${Hom_{A^e}(-,A)}$ to the Happel-Bardzell projective resolution (see \cite{Bardzell}). 
Once this complex is simplified,  
the space of cochains is expressed in terms of parallel paths and 
the differential maps are expressed in terms of an operation 
that replaces para${}$llel arrows in a path. 
In order to describe the commutator bracket in terms of parallel paths, 
Strametz gave maps from the first Hochschild cochain to the space of parallel paths 
(see \cite{strametz}). 
Those maps induce inverse linear isomorphism at the cohomological level, 
which enables the translation of the commutator bracket in terms of parallel arrows. 

In section 2, using the combinatorial commutator, 
we specify the  Lie algebra structure of $HH^1(A)$ in two cases: 
when $A$ is a radical square zero monomial algebra in one hand  
and 
when $A$ is a triangular complete monomial  algebra 
in the other hand. 
Recall that a finite dimensional algebra is called {\it{triangular}} 
if the quiver has no oriented cycles. 
A {\it{complete monomial algebra}} 
is a monomial algebra that verifies the following property:  
every path of length at least two parallel 
to a path which is zero in the algebra is also zero. 
For instance, radical square zero monomial algebras 
are complete monomial. 
We will study both cases separately: 
radical square zero algebras without any restriction on the quiver 
in one hand and 
complete monomial algebras 
whose quiver contains no oriented cycles in the other hand. 

Keeping in mind Levi's decomposition theorem, 
we will compute the sol${}$vable radical of $HH^1(A)$  
in order to obtain the semisimple part. 
Recall that the {\it{semisimple part of a Lie algebra}} is 
precisely the quotient by its solvable radical.  
For radical square zero monomial algebras, 
we specify the semisimple part and the solvable radical  
using the combinatorics of the quiver. Let us introduce some notation. 
Given a quiver $Q$, 
we denote $\overline{Q}$ the quiver obtained 
by identifying parallel arrows, 
i.e. multiple parallel arrows in $Q$ 
are seen as only one arrow in $\overline{Q}$. 
Denote $S$ the set of arrows in $\overline{Q}$ 
that correspond to more than one arrow in $Q$. 
We denote by $|\alpha|$ the cardinality of an element $\alpha$ in $S$.

One of the main results of this paper is the following. 
Let A be a radical square zero monomial algebra 
over a field of characteristic zero. 
The solvable radical of $HH^1(A)$ is abelian 
and its dimension is equals to the Euler characteristic of the quiver $\overline{Q}$. 
Moreover, the semisimple part of $HH^1(A)$    
is a direct product of Lie algebras of type 
$A_{|\alpha|}$ where $\alpha$ is in $S$.  
For example, 
consider the monomial algebra of radical square zero associated to 
the multiple-loops quiver, this is $k[x_1, \dots x_r]/<x_ix_j>_{i,j=1}^r$ 
where $r$ is the number of loops. 
As a consequence of the result mentioned above,  
its first Hochschild cohomology 
is $gl_r (\, k \,)$, the Lie algebra 
of square matrices of size $r$. 

Next, we consider 
triangular complete monomial algebras. 
The semisimple part of 
the first Hochschild cohomology Lie algebra 
can be expressed in the same terms as in the previous case. 
The solvable radical is not necessary abelian, however 
we give a description of it in terms of the combinatorics of the quiver. 

In the third section, 
our main purpose is to prove that for monomial algebras over a field of characteristic zero,  
the Hochschild cohomology vanishes from the second degree 
if the first Hochschild cohomology is semisimple as a Lie algebra.  
To do so, we begin by recalling the necessary and sufficient conditions   
for semi-simplicity of the the Lie algebra $HH^1(A)$ where $A$ 
is monomial, which are given in \cite{strametz}. Then we restate these conditions as follows. 

Let us assume that ${A=kQ/<Z>}$ where 
$kQ$ is the path algebra of a quiver $Q$ 
and $<Z>$ is the two-sided ideal generated by a minimal set of paths of length two which we denote by $Z$. The assumption of "minimal" is not restrictive since given a set of paths, we can extract a subset which is minimal and that generates the same ideal. The restatement of Strametz' theorem is the following. 
The Lie algebra $HH^1(A)$ is semisimple if and only if {\it{$Z$ is closed under parallel paths}}, the underlying graph of $\overline{Q}$ is a tree and the set $S$ is not empty.  
We also prove that if {\it{$HH^1(A)$ is semisimple then $A$ is a complete monomial algebra}}. 

Furthermore,  
we compute the Hochschild cohomology vector spaces of a monomial algebra 
$A=kQ/<Z>$ where $\overline{Q}$ is a tree and $Z$ is closed under parallel paths. 
To do so, we use the Happel-Bardzell projective resolution and the results of section two about complete monomial algebras.  
We prove that the Hochschild cohomology vector spaces of degree at least two vanish  
under these hypotheses.  
Finally, as a consequence of this computation, we obtain that if $HH^1(A)$ is semisimple then ${HH^n(A)=0}$ for all $n \geq 2$. \\

\thanks{{\bf{Acknowledgment.}} This paper is part of my PhD thesis that I defended in 
the University of Montpellier 2. I am thankful to my advisor Claude Cibils 
for discussions and advices. I would also like to thank Andrea Solotar for 
helpful suggestions in improving this paper. }
\setcounter{section}{0}
\section{The space of outer derivations of a monomial algebra.}
It is well known  
that the first Hochschild cohomology vector space  
is the space of outer derivations.  
Clearly, it is endowed with a Lie algebra structure 
given by the comm${}$utator bracket. 
Such structure was studied by C. Strametz 
for the case of finite dimensional monomial algebras, using combinatorial tools. 
In this section, we will recall the combinatorial description of 
the space of outer derivations of a monomial algebra.  
Then we will remind the description of the commutator 
bracket given by Strametz in the same terms as the combinatorial realization 
of the first Hochschild cohomology group of a monomial algebra. 

\subsection{Notations and assumptions.}
In this section we fix the notation and assumptions that we will be using all along this paper. 

Given a {\it{quiver}} $Q$, we denote 
$Q_0$ the set of {\it{vertices}} and 
$Q_1$ the set of {\it{arrows}}.  
The applications 
{\it{source}} and {\it{target}} 
defined from the set of arrows to the set of vertices
are denoted by $s$ and $t$ respectively, 
The {\it{loop}} quiver is given by one vertex and one arrow. 
The {\it{multiple loops}} quiver is given by one vertex and several loops. 
The  {\it{multiple arrows}} quiver 
is given by the following data:
$Q_0=\{e_1,e_2 \}$, 
$Q_1=\{a_1,\dots  a_r \}$, 
$s(a_i)=e_1$ and $t(a_i)=e_2$ for $i=1,\dots,r $. 

The {\it{paths}} are constructed 
by concatenating arrows as follows. 
Let $a_1, \dots, a_n$ be arrows such that 
$s(a_i)=t(a_{i+1})$ for $i=1,\dots,n-1$, 
the expression $a_1 a_2 \cdots a_n$ is a path denoted $p$. 
Let us remark that the source map and the target map 
can also be defined for paths as follows: $s(p)=s(a_n)$ and $t(p)=t(a_1)$. 
Let $p$ and $q$ be two paths, 
we say that they are {\it{composable}} if and only if $s(p)=t(q)$, 
in this case we write $pq$ for the path obtained after concatenation. 
Besides, a path $c$ such that $s(c)=t(c)$ is called an {\it{oriented cycle}}. 
Moreover,  the {\it{length}} of a path is the number of arrows used in its expression in arrows. 
We will denote $Q_n$ the set of all paths of $Q$ of length $n$. 
The set of vertices $Q_0$, 
which is the set of paths of length zero, 
will be considered as the set of {\it{trivial paths}}.  
Let $p$ and $q$ be two paths. 
We say that {\it{$q$ divides $p$}} if there exist paths $x$ and $y$ such that 
$p=xqy$. 
The {\it{underlying graph}} of the quiver is the graph 
obtained when the orientations of the arrows are ignored.

Let $k$ be a field. 
We denote $kQ$ the {\it{path algebra}} of a quiver $Q$.
Recall that its underlying vector space has a basis given by all paths of the quiver,  
and the multiplication is defined by concatenation of paths 
whenever they are composable, and zero otherwise. 
An {\it{admissible ideal}} $I$ is a two-sided ideal of a path algebra of a quiver $Q$ 
such that $$<Q_n> \, \subseteq \, I \subseteq \, <Q_2> $$
for some $n$, 
where $<Q_i>$ is the two-sided ideal generated by $Q_i$. 
For example, the two-sided ideal generated by $Q_n$ with $n \geq 2$ is an admissible ideal. Another example is the zero ideal which is admissible 
if and only if the quiver has no oriented cycles.  

A finite dimensional {\it{monomial algebra}} 
is a quotient of the path algebra of a quiver $Q$ 
by an admissible ideal generated 
by a set of paths of length at least two, 
which we will denote $Z$. 

\begin{assumption}
In the sequel, we will assume that 
\begin{itemize}
\item[-] $Q_1$ and $Q_0$ are non-empty sets. 
\item[-] $Q$ is finite, i.e. $Q_0$ and $Q_1$ are finite sets. 
\item[-] $Q$ is a connected, i.e. 
the underlying graph of Q is connected.  
\item[-] $Z$ is minimal, this means that for all path $p$ in $Z$ 
and for all path $q \ne p$ 
that strictly divides the path $p$, $q$ does not belong to the set $Z$. 
\end{itemize}
This last assumption is not restrictive since  
we can always extract from a set of paths 
a minimal subset  such that both sets generate the same ideal.  

Let
$B$ be the set of paths of $Q$ 
which are not divided by any element of $Z$. 
It is clear that the elements of $B$ 
form a basis of the monomial algebra $A$. 
Moreover, the Jacobson radical 
(i.e. the intersection of all left maximal ideals) 
of a monomial algebra denoted 
$r=rad \, A$ is given by 
$$r  = \frac{<Q_1>}{<Z>} $$
where $<Q_1>$ is the two-sided ideal generated by $Q_1$.  
Furthermore, $E=kQ_0$ is isomorphic to $A/r$. 
Moreover $A\cong E \oplus r$, as predicted by the Wedderburn-Malcev theorem.  
\end{assumption}

\subsection{A description of the Hochschild cohomology in first degree.} 
The computation of Hochschild cohomology vector spaces 
for a monomial algebra has been done 
using the Happel-Bardzell projective resolution \cite{Bardzell}. 
In the complex obtained,  
the space of cochains is expressed in terms of parallel paths and 
the differential maps are expressed in terms of an operation 
that replaces parallel arrows in a path. 
Let us first introduce both tools: parallel paths and this operation. 
 
Given a quiver $Q$, we say that two paths $\alpha$ and $\beta$ 
are {\it{parallel}} if and only if 
they have the same source and the same target.  
If $\alpha$ and $\beta$ are parallel  
we write $\alpha \parallel \beta$. 
Let $X$ and $Y$ be sets consisting of paths of $Q$, 
then  $X \parallel Y$ denotes the set of all pairs of paths 
$(\alpha, \beta)$ in $X \times Y$ that are parallel. 
We denote 
$k(X \parallel Y)$ the $k$-vector space 
generated by $X \parallel Y$.

Now, we introduce the operation $\diamantei$ that 
replaces parallel arrows in a path,  
where $i$ is a natural number between 1 
and the length of the path. 
Given a path $\alpha$ in $Q_n$ with $n \geq 1$
such that 
${\alpha =  a_1 \cdots a_i \cdots a_n}$,  
fix a natural number $1\leq i \leq n$. 
Let $\beta$ be a non trivial path in $Q_m$ such that 
$a_i \parallel \beta$. 
Replacing the arrow $a_i$ by the path $\beta$ we obtain a path 
\[
a_1 \cdots a_{i-1} \beta a_{i+1} \cdots a_n. 
\]
in $Q_{n+m-1}$. 
Let us denote $k(Q_n)$ the vector space generated by all paths of length $n$, 
the operation $\diamantei$ is given by: 
\[
\begin{array}{cccl}
 \diamantei:&k(Q_n) \times k(Q_m) & \rightarrow & k(Q_{n+m-1}) \\
 \,&\,&\,&\\
 \, &(\alpha,\beta)&\mapsto&
 \alpha \diamantei \beta = 
 \begin{cases} 
a_1 \cdots a_{i-1} \beta a_{i+1} \cdots a_n  & \text{if $a_i \parallel \beta$} \\ 
 0 & \text{otherwise.}
 \end{cases}
\end{array}
\]
\begin{notation}
Let $\alpha$ be a path in $Q$ and $(a,\gamma)$ in $Q_1 \parallel B$. 
Following Strametz we denote $\alpha^{(a,\gamma)}$ the element in $A$ 
given by the sum of all nonzero paths 
obtained by replacing each appearance of the arrow 
$a$ in $\alpha$ by $\gamma$. 
If the path $\alpha$ does not contain the arrow $a$ or if every replacement 
of $a$ in $\alpha$ is not a path in $B$, 
then $\alpha^{(a,\gamma)}=0$. 
For example, let $\alpha=aba$ be a path,
$\alpha^{(a,\gamma)}=ab \gamma + \gamma ba$ 
in case $ab \gamma$ and 
$\gamma ba$ are paths in $B$. 
In general, if $\alpha =a_1 \cdots a_i \cdots a_n$, 
then  $\alpha^{(a, \gamma)}$ is the element of $A$ given by 
\[
\alpha^{(a,\gamma)} =
\displaystyle{\sum_{i=1}^n 
\delta_{a_i}^a\:
\chi_{B}(\alpha \diamantei \gamma) \: 
\alpha \diamantei \gamma } 
\]
where $\delta_{a_i}^a$ is the Kronecker symbol  
and $\chi_{B}$ is the characteristic function. 
It is clear that $\alpha$ is parallel to $\alpha \diamantei \gamma$ for all $i$. 
Now, let us suppose $\alpha$  belongs to a certain set of paths $X$. 
We denote  $(\alpha,\alpha^{(a,\gamma)})$ the element 
in $k(X \parallel B)$ given by the following sum:
\[
(\alpha,\alpha^{(a,\gamma)}): =
\displaystyle{\sum_{i=1}^n 
\delta_{a_i}^a\:
\chi_{B}(\alpha \diamantei \gamma) \: 
(\alpha, \alpha \diamantei \gamma )} .
\]
\end{notation}

We are able to state 
the proposition providing a combinatorial description of the vector space of outer derivations $HH^1(A)$ when $A$ is monomial. 

\begin{prop}[\cite{strametz}]
Let $A=kQ/<Z>$ be a monomial algebra and let 
$B$ denote the basis of $A$ described before. 
The beginning of the complex induced by the Happel-Bardzell resolution 
can be described in the following way:
\[
0 \longrightarrow k(Q_0 \parallel B) 
\stackrel{\psi_0}{\longrightarrow} k(Q_1 \parallel B)
\stackrel{\psi_1}{\longrightarrow} k(Z   \parallel B) 
\rightarrow \cdots
\]
where the maps $\psi_0$ and $\psi_1$ are given by
\begin{equation}
\begin{array}{ccl}
\psi_0(e,\gamma)&=
&\displaystyle{
\sum_{a \, | \, s(a)=e} (a, a\gamma) \: \: - \sum_{a \, | \, t(a)=e} (a, \gamma a )} 
\end{array}
\end{equation}
\begin{equation}\label{psiuno}
\begin{array}{ccll}
\psi_1(a,\gamma)&=& 
\displaystyle{\sum_{p \in Z} \; (p,p^{(a,\gamma)}) }. & \quad
\end{array}
\end{equation}
Therefore, 
$$
HH^1(A) \cong \frac{Ker \, \psi_1 }{ Im \, \psi_0}  \; .
$$
\end{prop}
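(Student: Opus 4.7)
The plan is to compute $HH^1(A)$ by applying the functor ${Hom_{A^e}(-,A)}$ to the initial segment of the Happel--Bardzell minimal projective resolution of $A$ as an $A$-bimodule, and then to translate each cochain space and each differential into the combinatorial language of parallel paths. Setting $E = kQ_0$, the resolution begins
\[
\cdots \longrightarrow A \otimes_E kZ \otimes_E A \stackrel{d_2}{\longrightarrow} A \otimes_E kQ_1 \otimes_E A \stackrel{d_1}{\longrightarrow} A \otimes_E kQ_0 \otimes_E A \stackrel{\mu}{\longrightarrow} A \longrightarrow 0,
\]
with $d_1(1 \otimes a \otimes 1) = a \otimes e_{s(a)} \otimes 1 - 1 \otimes e_{t(a)} \otimes a$ and, for each relation $p = a_1 \cdots a_n \in Z$, with $d_2(1 \otimes p \otimes 1) = \sum_{i=1}^n a_1 \cdots a_{i-1} \otimes a_i \otimes a_{i+1} \cdots a_n$. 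A short check that $d_1 d_2 = 0$ uses the fact that successive arrows in a path satisfy $s(a_i) = t(a_{i+1})$, which makes the inner terms telescope, together with the vanishing of $p$ in $A$ to absorb the boundary terms.

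Using the standard adjunction identifying $Hom_{A^e}(A \otimes_E V \otimes_E A, A)$ with the space of $E$-bimodule maps $V \to A$, I would next identify the cochain spaces in the first three degrees with $k(Q_0 \parallel B)$, $k(Q_1 \parallel B)$, $k(Z \parallel B)$ respectively: an $E$-bimodule map from $kX$ (with $X$ a set of paths) to $A$ is determined by its values on the paths $p$ in $X$, and $f(p)$ must lie in $e_{t(p)} A e_{s(p)}$, whose $k$-basis consists of the elements of $B$ parallel to $p$. Transporting the differentials is then a direct calculation. A cochain $(e, \gamma) \in k(Q_0 \parallel B)$ corresponds to the map sending $e \mapsto \gamma$ and killing the other vertices; precomposing with $d_1$ and evaluating at an arrow $a$ yields $a\gamma$ exactly when $s(a)=e$ and $-\gamma a$ exactly when $t(a)=e$, which is $\psi_0(e, \gamma)$. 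Similarly, a cochain $(a, \gamma) \in k(Q_1 \parallel B)$ corresponds to the map sending $a \mapsto \gamma$; precomposing with $d_2$ and evaluating at a relation $p = a_1 \cdots a_n$ produces in $A$ the element $\sum_i \delta_{a_i}^a \, a_1 \cdots a_{i-1} \gamma a_{i+1} \cdots a_n$, where the summands whose underlying path leaves $B$ vanish automatically; this is precisely $p^{(a,\gamma)}$, giving $\psi_1(a,\gamma)$.

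The conclusion ${HH^1(A) \cong Ker \, \psi_1 / Im \, \psi_0}$ then follows from the definition of Hochschild cohomology as the cohomology of the complex obtained from a projective resolution. The principal technical obstacle is the preliminary step: verifying that the Bardzell differential in degree two acts on the generators $1 \otimes p \otimes 1$ by the unsigned formula used above. This requires entering Bardzell's construction for monomial algebras and exploiting the fact that each relation is a single path of length at least two, rather than a linear combination, so that $d_2$ arises simply by inserting identity elements between consecutive arrows, with no bar-resolution signs surviving. Once this formula is in place, the identification of $\psi_0$ and $\psi_1$ is a purely formal dualisation.
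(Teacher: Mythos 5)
Your proposal is correct and follows essentially the route the paper attributes to Strametz: apply $Hom_{A^e}(-,A)$ to the Happel--Bardzell resolution, identify the $E$-bimodule maps $kX \to A$ with $k(X \parallel B)$, and transport the differentials, with the only real work being the verification of Bardzell's degree-two differential, which you correctly flag. The paper itself gives no proof (it cites \cite{strametz}), but your computation of $\psi_0$ and $\psi_1$ matches the stated formulas, so nothing further is needed.
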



\subsection{Combinatorial commutator bracket.}
The Lie algebra structure of the outer derivations is given by the commutator bracket. In order to study this structure using the above 
combinatorial presentation we translate the commutator bracket 
in terms of the combinatorial description of $HH^1(A)$. 
In order to translate the commutator bracket Strametz gave  
maps from $C^1(A,A)$ to ${k(Q_1 \parallel B)}$ and vice versa. Those maps induce inverse linear isomorphisms at the cohomological level, i.e. between $HH^1(A)$.  In fact, they are induced from some comparison maps, which are written explicitly for the first degree. 
This procedure enables us to give a Lie algebra structure 
to the vector space $k(Q_1 \parallel B)$. 
The combinatorial commutator bracket is given by the following result.

\begin{teo}[\cite{strametz}]
Let $A=kQ/<Z>$ be a monomial algebra and 
let $B$ be the corresponding basis of $A$. 
Consider the bracket 
$$
[\, - \, , \, - \,]_S: k(Q_1 \parallel B) \times k(Q_1 \parallel B) 
\longrightarrow  k(Q_1 \parallel B)
$$
given by 
\begin{equation}
\begin{array}{ccl}
[ \, (a,\alpha) \, , \, (b, \beta) \,]_S &=& 
(b,\beta^{(a, \alpha)}) - (a,\alpha^{(b, \beta)}) 
\\
\, &= &\displaystyle{ 
\sum_{i=1}^m 
\delta_{b_i}^a \, 
\chi_B(\beta \diamantei  \alpha)\, 
(b,\beta \diamantei  \alpha)} \\
\, &\, &\, \\
\,& - &
\displaystyle{
\sum_{i=1}^n \delta_{a_i}^b \, 
\chi_B(\alpha \diamantei  \beta) 
\, (a,\alpha \diamantei \beta)}
\end{array}
\end{equation}
where $\alpha=a_1 \cdots a_n$ and 
$\beta=b_i \cdots b_m$; 
the functions  $\delta_{x}^y$ and $\chi_B$ 
are the Kronecker symbol 
and the characteristic function respectively.

The above bracket induces a Lie algebra structure 
on  the first Hochschild cohomology group 
$
HH^1(A) \cong Ker \, \psi_1 / Im \, \psi_0
$
which is a Lie algebra isomorphic to 
$HH^1(A)$ with the commutator bracket. 
\end{teo}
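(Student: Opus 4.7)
The plan is to reduce the theorem to a direct computation of the commutator of two derivations evaluated on arrows. Any element $\eta = \sum \lambda_{a,\gamma}(a,\gamma)$ of $k(Q_1 \parallel B)$ determines a $k$-linear derivation $D_\eta$ of the path algebra $kQ$ by setting $D_\eta(a) = \sum_\gamma \lambda_{a,\gamma}\gamma$ on arrows and extending via the Leibniz rule. The cocycle condition $\psi_1(\eta) = 0$ is precisely what is needed for $D_\eta$ to send the ideal $\langle Z\rangle$ into itself, so $D_\eta$ descends to an $E$-linear derivation of $A$. Conversely, any $E$-linear derivation of $A$ is determined by its values on arrows, and expanding those values in the basis $B$ recovers an element of $k(Q_1 \parallel B)$.

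First I would write the two maps explicitly: $\Phi_S : k(Q_1 \parallel B) \to C^1(A,A),\ \eta \mapsto D_\eta$, and $\Psi_S : \mathrm{Der}_E(A,A) \to k(Q_1 \parallel B),\ D \mapsto \sum_{a \in Q_1}(a, D(a))$. Then I would exhibit these as components of comparison morphisms between the Happel--Bardzell resolution and the bar resolution in low degrees; the general principle that comparison maps induce isomorphisms on cohomology then yields inverse linear isomorphisms between the combinatorial $\ker\psi_1 / \mathrm{im}\,\psi_0$ and the standard $HH^1(A)$. A separate short check identifies $\psi_0(e,\gamma)$ with the inner derivation $\mathrm{ad}_\gamma$ (up to sign) under $\Phi_S$, so that the coboundaries match in the two models.

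The heart of the argument is to verify $\Phi_S([(a,\alpha),(b,\beta)]_S) = [D_{a,\alpha}, D_{b,\beta}]$ in $C^1(A,A)$. Since both sides are derivations, it suffices to evaluate on an arbitrary arrow $c$:
\[
[D_{a,\alpha},D_{b,\beta}](c) \;=\; \delta_c^b\, D_{a,\alpha}(\beta) \;-\; \delta_c^a\, D_{b,\beta}(\alpha).
\]
Expanding $D_{a,\alpha}(\beta)$ by Leibniz on $\beta = b_1 \cdots b_m$ produces $\sum_i \delta_{b_i}^a\, \chi_B(\beta \diamantei \alpha)\,(\beta \diamantei \alpha) = \beta^{(a,\alpha)}$, and symmetrically for $D_{b,\beta}(\alpha)$, which matches Strametz's formula. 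The main subtlety I anticipate is the bookkeeping around $\chi_B$: Leibniz a priori produces elements of $kQ$, and passing to $A$ annihilates paths not in $B$. For monomial $A$ this is clean because $B$ is closed under taking subpaths, so each term is either in $B$ or vanishes in $A$; this is exactly what $\chi_B$ records. Antisymmetry and the Jacobi identity on the combinatorial side are then inherited via the isomorphism from the commutator bracket on $C^1(A,A)$.
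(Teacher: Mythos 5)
Your proposal is correct and follows essentially the same route the paper attributes to Strametz (the paper itself gives no proof here, only a citation and a sketch): explicit maps between $C^1(A,A)$ and $k(Q_1\parallel B)$ arising as degree-one components of comparison morphisms between the bar and Happel--Bardzell resolutions, identification of $\psi_0$ with inner derivations, and a direct Leibniz computation showing the combinatorial bracket matches the commutator of the associated derivations, with $\chi_B$ recording the passage from $kQ$ to $A$.
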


\section{Lie algebra structure.}
In this section we are concerned 
about the Lie algebra structure of the first 
Hochschild cohomology of a monomial algebra. 
We will determine its radical and 
its semisimple part in two cases. 
The first one is when the  
monomial algebra is of radical square zero. 
In the second case, 
we will consider triangular complete monomial algebras. 
Then, using Levi's decomposition theorem, 
we obtain a complete description of 
the Lie algebra structure of the 
first Hochschild cohomology of such algebras.  

\subsection{Parallel arrows.}
The combinatorial commutator bracket 
described in the above section 
induces a Lie algebra structure 
on the cochains $k(Q_1 \parallel B)$. 
Let us remark that $k(Q_1 \parallel Q_1)$ 
becomes a Lie subalgebra 
with the combinatorial commutator bracket 
\[
[\,- \, , \, -\,]_S: k(Q_1 \parallel Q_1) \times k(Q_1 \parallel Q_1)
\rightarrow k(Q_1 \parallel Q_1)
\]
given by 
\[
[\, (a,a')\, , \, (b,b')\,]_S= 
\delta_{b'}^ a \, (b,a') - \delta_{a'}^b \, (a,b') \, .
\]
We call $k(Q_1 \parallel Q_1)$ the  {\it{Lie algebra of parallel arrows}}. 
In this subsection we will study both Lie algebras: 
$k(Q_1 \parallel Q_1)$ and $k(Q_1 \parallel B)$. 

Given a quiver $Q$ we have that $\parallel$ 
is an equivalence relation on the set of arrows $Q_1$. 
We denote $\overline{Q_1}$ the set of equivalence classes. 
It is clear that the maps source $s$ and target $t$ are well defined on $\overline{Q}_1$. 
The quiver which has $Q_0$ as vertices  
and $\overline{Q_1}$ as set of arrows, 
together with the maps $s$ and $t$ will be denoted $\overline{Q}$. 
Note that in the quiver $\overline{Q}$, 
all multiple parallel arrows of $Q$ are identified. 

We will show that the Lie algebra $k(Q_1 \parallel Q_1)$ can be expressed 
as a direct product of the endomorphism Lie algebras 
$\gl_{\alpha}$ where $\alpha$ is an arrow of $\overline{Q}$. 
Let us introduce such Lie algebras.
\begin{notation}
Given $\alpha$ in $\overline{Q}_1$ we denote 
$$
\gl_{\alpha}= 
\displaystyle{ \bigoplus_{a,a' \in \alpha} k \, (a,a')}
$$ 
and  
$$I_{\alpha}=\sum_{a \in \alpha} (a,a).$$
\end{notation} 
Clearly, the vector space $\gl_{\alpha}$ together 
with the above bracket is a Lie subalgebra 
of $k(Q_1 \parallel Q_1)$. 
Let us show that these are endomorphism Lie algebras. 
Denote $V_\alpha$ the vector space whose basis is the set $\alpha$, 
so $dim_k (V_\alpha)=|\alpha|$. 
Consider $End_{k}\,(V_\alpha)$, 
the Lie algebra of endomorphisms of 
$V_\alpha$ with the commutator bracket. 
We will show that $End_{k}\,(V_\alpha)$ and 
$gl_\alpha$ are isomorphic. 
Given $a,a' \in \alpha$, 
denote $f_{(a,a')}:V_\alpha \rightarrow V_\alpha$ the linear morphism
given by: 
\[
f_{(a,a')}(\displaystyle{\sum_{x \in \alpha}} \lambda_x x)= \lambda_{a}a'.
\]
The inverse map is given by the following: 
\[
\begin{array}{clc}
\gl_{\alpha} & \rightarrow & End_k \, (V_\alpha)\\
(a,a')& \mapsto& -f_{(a,a')}
\end{array}
\]
The minus sign in the right side is needed 
in order to guarantee this map to be an homomorphism of Lie algebras.  
Therefore, $\gl_\alpha$ 
is isomorphic to the Lie algebra of endomorphisms of $V_{\alpha}$. 
We will use this Lie algebra to describe the Lie algebra structure of the parallel arrows.  
The description of the Lie algebra $k(Q_1 \parallel Q_1)$ 
and its radical is given by the following lemma. 
\begin{lemma}\label{matrices} 
$$\displaystyle{k(Q_1 \parallel Q_1) = \prod_{\alpha \in \overline{Q}_1}
\gl_{\alpha} } $$
as Lie algebras. Moreover, 
\[
\displaystyle{ rad \, k(Q_1 \parallel Q_1) 
= \prod_{\alpha \in \overline{Q}_1} k \, I_{\alpha} } .
\]
\end{lemma}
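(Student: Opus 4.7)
The plan is to prove both statements in three steps: decompose the space as a Lie algebra into blocks indexed by $\overline{Q}_1$, recognise each block as a general linear Lie algebra (already done in the text), and then compute the radical block-by-block.

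First, the basis $Q_1 \parallel Q_1$ of $k(Q_1 \parallel Q_1)$ partitions according to the common parallelism class: a pair $(a,a')$ is parallel if and only if $a$ and $a'$ lie in the same class $\alpha \in \overline{Q}_1$. This already gives the vector space decomposition
\[
k(Q_1 \parallel Q_1) \;=\; \bigoplus_{\alpha \in \overline{Q}_1} \gl_{\alpha},
\]
a direct sum which equals the product since $Q$ is finite. To upgrade this to a decomposition of Lie algebras I would check that $[\gl_\alpha,\gl_\beta]_S = 0$ whenever $\alpha \ne \beta$. This follows immediately from the bracket formula
\[
[(a,a'),(b,b')]_S = \delta^{a}_{b'}(b,a') - \delta^{b}_{a'}(a,b'),
\]
because $\delta^{a}_{b'} \neq 0$ would force $a = b'$, hence $a \in \alpha \cap \beta = \emptyset$, and similarly for the other term. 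Combined with the identification $\gl_\alpha \cong \mathrm{End}_k(V_\alpha)$ established just before the lemma, this proves the first assertion.

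For the radical I would use the standard fact that the solvable radical of a finite product of Lie algebras is the product of the radicals, so
\[
\mathrm{rad}\,k(Q_1 \parallel Q_1) \;=\; \prod_{\alpha \in \overline{Q}_1} \mathrm{rad}\,\gl_{\alpha}.
\]
Each factor is isomorphic to $\gl_n(k)$ with $n = |\alpha|$; its Levi decomposition $\gl_n(k) = \mathfrak{sl}_n(k) \oplus k\cdot\mathrm{id}$, with $\mathfrak{sl}_n(k)$ simple for $n \geq 2$ and zero for $n = 1$, shows that the radical is precisely the central line $k\cdot\mathrm{id}$. A direct computation gives $\sum_{a \in \alpha} f_{(a,a)} = \mathrm{id}_{V_\alpha}$, so $I_\alpha = \sum_{a \in \alpha}(a,a)$ corresponds up to sign to $\mathrm{id}_{V_\alpha}$ under the isomorphism; this identifies $\mathrm{rad}\,\gl_\alpha$ with $k\,I_\alpha$ and completes the proof.

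No step here is truly an obstacle: the decomposition into blocks is forced by the parallelism relation, the Lie-algebra structure within each block is the endomorphism algebra computation already carried out in the text, and the radical is extracted by elementary structure theory of $\gl_n(k)$. The only place where one must be a little careful is the cancellation argument for cross-block brackets, where one uses crucially that different parallelism classes share no arrows so all the relevant Kronecker deltas vanish.
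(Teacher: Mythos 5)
Your proposal is correct and follows essentially the same route as the paper: decompose into the blocks $\gl_\alpha$, observe that cross-block brackets vanish because distinct parallelism classes share no arrows, and then use that the radical of a finite product is the product of the radicals together with $rad\,\gl_\alpha = k\,I_\alpha$. You supply slightly more detail than the paper (the explicit Kronecker-delta cancellation and the identification of $I_\alpha$ with the identity of $V_\alpha$), but there is no difference in substance.
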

\begin{proof}
If $\alpha \ne \beta$ then 
$\gl_{\alpha} \cap \gl_{\beta} =0$ and 
$
[(a,a'), (b,b')]_S=0 
$
for all $(a,a')$ in $\gl_{\alpha}$ and $(b,b')$ in $\gl_{\beta}$. 
Then it is easy  to conclude that $k(Q_1 \parallel Q_1)$ is the product of all $\gl_\alpha$. 
Now, recall that taking radical of Lie algebras commutes with finite products, 
so the radical of $k(Q_1 \parallel Q_1)$ is the product of the radicals 
of the $\gl_{\alpha}$'s. 
Since the radical of $\gl_{\alpha}$ is $k \, I_{\alpha}$, 
we obtain that the radical of $k(Q_1 \parallel Q_1)$ is 
the direct product of $k\, I_{\alpha}$'s. 
\end{proof}

Now we are ready to study 
the Lie algebra $k(Q_1 \parallel B)$. 
Let us remark that  
$$k(Q_1 \parallel B) = k(Q_1 \parallel Q_0) 
\oplus k(Q_1 \parallel Q_1) \oplus 
\bigoplus_{i=2}^N k(Q_1 \parallel B \cap Q_i) $$
where $N$ is the maximum length of non-zero paths in $A$. 
If we set that the elements of 
${k(Q_1 \parallel B \cap Q_i)}$ are of degree $i-1$, 
the combinatorial commutator bracket is graded. 
Let 
\[
R=\bigoplus_{i=2}^N k(Q_1 \parallel B \cap Q_i).
\] 

\begin{lemma}\label{decoqb}
Let $Q$ be a quiver without loops and 
consider the Lie algebra ${k(Q_1 \parallel B)}$. 
Then $R$ is a solvable ideal. 
Since $k(Q_1 \parallel Q_1)$ is a subalgebra 
of ${k(Q_1 \parallel B)}$, 
the following decomposition of 
$k(Q_1 \parallel B)$ holds
\[
k(Q_1 \parallel B)=k(Q_1 \parallel Q_1) \oplus R.
\]
\end{lemma}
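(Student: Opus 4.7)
The plan is to equip $k(Q_1 \parallel B)$ with a grading under which the combinatorial commutator becomes a graded bracket, and then read off both the ideal property and the solvability of $R$ directly from the degrees.

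First I would dispose of the vector space decomposition. Because $Q$ has no loops, an arrow $a$ is parallel to a trivial path $e \in Q_0$ only if $s(a)=t(a)=e$, which is impossible. Hence $k(Q_1 \parallel Q_0) = 0$, so the decomposition $k(Q_1 \parallel B) = k(Q_1 \parallel Q_1) \oplus R$ is simply the length decomposition of $B$ already noted in the text, together with the vanishing of the length-zero summand.

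Next I would make the grading explicit. Declare the degree of $(a,\gamma) \in k(Q_1 \parallel B \cap Q_i)$ to be $i-1$, so that $k(Q_1 \parallel Q_1)$ is the degree $0$ piece and $R = \bigoplus_{d \geq 1} R_d$, with $R_d$ spanned by pairs whose second entry has length $d+1$. The content of the argument is that the bracket $[\,-,-\,]_S$ of the previous theorem is graded of degree zero: by definition of $\diamantei$, replacing one arrow in a path of length $n$ by a path of length $m$ produces a path of length $n+m-1$, whose degree is $(n-1)+(m-1)$. Unwinding the formula $[(a,\alpha),(b,\beta)]_S = (b,\beta^{(a,\alpha)})-(a,\alpha^{(b,\beta)})$, each surviving summand therefore lies in $R_{(\deg \alpha)+(\deg \beta)}$, which proves $[R_{d_1},R_{d_2}] \subseteq R_{d_1+d_2}$. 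In particular, for $x \in k(Q_1 \parallel B)$ of any degree and $y \in R$ of degree $\geq 1$, the bracket $[x,y]_S$ has degree $\geq 1$ and hence sits in $R$; so $R$ is a (two-sided) ideal, and in particular a Lie subalgebra.

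Finally, for solvability I would iterate the degree estimate along the derived series. If $R^{(n)}$ is concentrated in degrees $\geq d_n$ with $d_0 = 1$, then $R^{(n+1)} = [R^{(n)},R^{(n)}]$ lies in degrees $\geq 2 d_n$, so $d_n \geq 2^n$. Since $A$ is finite dimensional, $B$ consists of paths of length at most some $N$, so the grading is bounded above by $N-1$; therefore $R^{(n)} = 0$ as soon as $2^n > N-1$, and $R$ is solvable. The only potential subtlety is the bookkeeping for the graded property of $[\,-,-\,]_S$, but this is a direct consequence of the length formula for $\diamantei$, so I expect no real obstacle.
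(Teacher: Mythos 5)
Your proof is correct and follows essentially the same route as the paper: grade $k(Q_1\parallel B\cap Q_i)$ in degree $i-1$, observe that the length formula for $\diamantei$ makes the bracket graded of degree zero, deduce the ideal property from the degree count, and get solvability because the derived series climbs in degree while the grading is bounded by $N-1$. Your explicit remark that $k(Q_1\parallel Q_0)=0$ in the absence of loops, and the sharper $d_n\geq 2^n$ estimate, are minor refinements of what the paper leaves implicit.
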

\begin{proof}
First we prove that $R$ is an ideal: 
let $(x,\alpha)$ be in $k(Q_1 \parallel B)$ 
and let $(y, \beta)$ be  in $R$ (i.e. $l(\beta) \geq 2$). 
Using the definition of the combinatorial commutator bracket, 
$[\, (x, \alpha) \, , \, (y,\beta)\, ]_S$ is in 
$k(Q_1\parallel B \cap Q_{l(\alpha)+l(\beta)-1})$ where 
${l(\alpha)+l(\beta)-1 \geq 2}$. 
So it is clear that $R$ is an ideal. 
Let us prove that $R$ is solvable, i.e. that  the terms of 
its derived series ${\mathscr{D}}^l(R)$ vanish for $l \geq l_0$, 
where $l_0$ is some natural number. 
Recall that if ${\mathfrak{g}}$ is a Lie algebra 
then its derived series is the sequence defined by 
${\mathscr{D}}^0({\mathfrak{g}})={\mathfrak{g}}$ and 
${\mathscr{D}}^{l+1}({\mathfrak{g}})=
[{\mathscr{D}}^l({\mathfrak{g}}) \:, \:
{\mathscr{D}}^l({\mathfrak{g}})]$.
In order to prove that $R$ is solvable, let us remark the following: 
$${\mathscr{D}}^{1}(R)=[\, R \, , \, R \,]_S \subseteq 
\bigoplus_{i=3}^N k(Q_1 \parallel B \cap Q_i).$$ 
Moreover, the derived series of $R$ satisfies 
$${\mathscr{D}}^{l+1}(R)=
[{\mathscr{D}}^l(R) \:,\: 
{\mathscr{D}}^l(R)]_S \subseteq 
\bigoplus_{i=i_{l+1}}^N k(Q_1 \parallel B \cap Q_i)$$
where $i_{l} \leq i_{l+1} \leq N$. 
Then it is clear that $R$ is solvable since there exists $i_0$ 
such that $B \cap Q_i$ is empty for $i>i_0$. Therefore 
${\mathscr{D}}^{l}(R)=0$ for $l \geq l_0$.
\end{proof}

The following lemma allows to compute the radical of 
$k(Q_1 \parallel B)$. 
\begin{lemma}\label{lemaqarbol}
Let $Q$ be a quiver without loops. 
Consider the Lie algebra ${k(Q_1 \parallel B)}$. 
Then 
$rad \, k(Q_1 \parallel Q_1) \oplus R $
is a solvable ideal. 
Therefore, 
$$rad \, k(Q_1 \parallel Q_1) \oplus R \subseteq rad \, k(Q_1 \parallel B).$$
\end{lemma}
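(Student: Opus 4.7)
The plan is to prove the two claims --- that $rad\, k(Q_1 \parallel Q_1) \oplus R$ is an ideal of $k(Q_1 \parallel B)$ and that it is solvable --- separately, after which the inclusion follows from the fact that $rad\, k(Q_1 \parallel B)$ is the maximal solvable ideal. Since $Q$ has no loops, $k(Q_1 \parallel Q_0)$ vanishes, so Lemma \ref{decoqb} gives the clean splitting $k(Q_1 \parallel B) = k(Q_1 \parallel Q_1) \oplus R$, which is implicit throughout.

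Since $R$ is already an ideal by Lemma \ref{decoqb}, the ideal property reduces to computing $[(b,\beta), I_\alpha]_S$ for a basis element $(b,\beta)$ of $k(Q_1 \parallel B)$ and a generator $I_\alpha = \sum_{a \in \alpha}(a,a)$ of $rad\, k(Q_1 \parallel Q_1)$ (recall Lemma \ref{matrices}). Applying the combinatorial bracket formula of the previous theorem, one finds $\beta^{(a,a)} = n_a(\beta)\,\beta$, where $n_a(\beta)$ counts the occurrences of the arrow $a$ in the path $\beta$, together with $a^{(b,\beta)} = \delta_a^b\,\beta$; summing over $a \in \alpha$ yields a scalar multiple of $(b,\beta)$ of the form $(n_\alpha(\beta) - \varepsilon)(b,\beta)$, with $n_\alpha(\beta) = \sum_{a \in \alpha} n_a(\beta)$ and $\varepsilon$ equal to $1$ if $b \in \alpha$ and $0$ otherwise. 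The decisive point is that whenever $(b,\beta) \in k(Q_1 \parallel Q_1)$, $\beta$ is an arrow parallel to $b$, so $b$ and $\beta$ lie in the same equivalence class of $\overline{Q}_1$ and the scalar collapses to zero. When $\beta$ has length at least two, the bracket stays in $R$. In summary, $[rad\, k(Q_1 \parallel Q_1), k(Q_1 \parallel B)]_S \subseteq R$, and the ideal property follows.

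Solvability is then straightforward. The derived subalgebra of $rad\, k(Q_1 \parallel Q_1) \oplus R$ decomposes into three pieces: $[rad\, k(Q_1 \parallel Q_1), rad\, k(Q_1 \parallel Q_1)]_S$ vanishes because the radical is abelian (Lemma \ref{matrices}); $[rad\, k(Q_1 \parallel Q_1), R]_S$ sits in $R$ by the computation above; and $[R,R]_S \subseteq R$ since $R$ is a subalgebra. Hence the derived subalgebra of $rad\, k(Q_1 \parallel Q_1) \oplus R$ lies inside $R$, which is solvable by Lemma \ref{decoqb}; iterating the derived series shows the whole ideal is solvable. The inclusion $rad\, k(Q_1 \parallel Q_1) \oplus R \subseteq rad\, k(Q_1 \parallel B)$ is then immediate from the maximality of the radical.

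The main obstacle is the central bracket computation. Everything hinges on the cancellation $n_\alpha(\beta) - \varepsilon = 0$ when $(b,\beta) \in k(Q_1 \parallel Q_1)$, and this uses the no-loops hypothesis in two distinct ways: it excludes elements $(b,\beta) \in k(Q_1 \parallel Q_0)$ (for which $\beta$ would be a vertex and the above analysis would need to be redone), and it forces any arrow $\beta$ parallel to $b$ to share the same equivalence class as $b$ in $\overline{Q}_1$. Once this cancellation is in hand, everything else is formal manipulation of solvable ideals.
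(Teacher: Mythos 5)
Your proof is correct and follows essentially the same route as the paper's: reduce to the bracket of a basis element of $k(Q_1 \parallel B)$ against $I_\alpha$, check that it vanishes on $k(Q_1 \parallel Q_1)$ and lands in $R$ when the path has length at least two, and deduce solvability from the inclusion of the derived subalgebra in $R$. (One minor quibble: the fact that an arrow $\beta$ parallel to $b$ lies in the same class of $\overline{Q}_1$ as $b$ is just the definition of the equivalence classes and does not use the no-loops hypothesis; that hypothesis is only needed to rule out $k(Q_1 \parallel Q_0)$.)
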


\begin{proof}
Let
$I=rad \, k(Q_1 \parallel Q_1) \oplus R =
\prod_{\alpha \in \overline{Q}_1} k \, I_{\alpha} \oplus R$. 
First, we will show that $I$ is an ideal. 
Since we have already proved that $R$ is an ideal, 
it is enough to verify that 
$[\, k(Q_1 \parallel B) \, , \, 
\prod_{\alpha \in \overline{Q}_1} k \, I_{\alpha}\, ]_S$ belongs to $I$. 
Let $(x, \gamma)$ be in $k(Q_1 \parallel B)$, we will calculate 
$[\, (x, \gamma) \, , \, I_\alpha \, ]_S$ for all 
$\alpha$ in $\overline{Q}_1$. 
If $l(\gamma)=1$, 
this means that $\gamma$ is an arrow which is parallel to $x$. 
So, if $x \notin \alpha$ then it is clear that 
$[\, (x,\gamma)\, , \, I_{\alpha}\, ]_S=0$. 
Now, if $x \in \alpha$ then 
$[\, (x,\gamma) \, , \, I_\alpha \, ]_S = (x,\gamma)-(x,\gamma)=0.$ 
Therefore, for all $\alpha \in \overline{Q}_1$, 
$[\, (x,\gamma)\, , \, I_\alpha \, ]_S =0$. 
If $l(\gamma) \geq 2$ then we will show that 
$[\, (x, \gamma)\, , \, I_\alpha \, ]_S$ belongs to $R$. 
Using the combinatorial bracket definition, 
$[\, (x, \gamma) \, , \, I_\alpha \, ]_S$ is a multiple of $(x,\gamma)$. 
Since $l(\gamma) \geq 2$, $[\, (x, \gamma) \, , \, I_\alpha \, ]_S$ is in $R$ 
for all $\alpha$ in $\overline{Q}_1$.  
We conclude that $[ \,k(Q_1 \parallel B) \, ,\,  I \,]_S \subseteq R$. 
From this inclusion  we infer that $I$ is an ideal. 
Moreover, $[ \, I \, , \,  I \, ]_S \subseteq R$, 
then I is solvable since $R$ is solvable.  
\end{proof}

\begin{lemma}\label{radqb}
Let $Q$ be a quiver without loops. 
Consider the Lie algebra ${k(Q_1 \parallel B)}$. 
Then 
\[
rad \, k(Q_1 \parallel B)= 
rad \, k(Q_1 \parallel Q_1) \oplus R 
\] 
\end{lemma}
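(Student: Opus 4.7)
My plan is to use the decomposition from Lemma \ref{decoqb} together with Lemma \ref{lemaqarbol} and then argue via the quotient Lie algebra. Since Lemma \ref{lemaqarbol} already establishes that
\[
I := \operatorname{rad} k(Q_1 \parallel Q_1) \oplus R
\]
is a solvable ideal of $k(Q_1 \parallel B)$, the inclusion $I \subseteq \operatorname{rad}\, k(Q_1 \parallel B)$ is automatic. So the whole task is to prove the reverse inclusion, and the natural route is to show that $k(Q_1 \parallel B)/I$ is semisimple, which by the characterization of the radical (any solvable ideal with semisimple quotient is the radical) will immediately give equality.

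The key observation is that because $R$ is an ideal of $k(Q_1 \parallel B)$ and $k(Q_1 \parallel Q_1)$ is a Lie subalgebra complementary to $R$ (Lemma \ref{decoqb}), we have a canonical isomorphism of Lie algebras
\[
k(Q_1 \parallel B)/R \;\cong\; k(Q_1 \parallel Q_1).
\]
Pushing this one step further, $I/R$ corresponds under this isomorphism to $\operatorname{rad}\, k(Q_1 \parallel Q_1)$, so by the third isomorphism theorem for Lie algebras
\[
k(Q_1 \parallel B)/I \;\cong\; k(Q_1 \parallel Q_1)\big/\operatorname{rad}\, k(Q_1 \parallel Q_1).
\]
By Lemma \ref{matrices}, the right-hand side is $\prod_{\alpha \in \overline{Q}_1} \glalpha / k\,\Ialpha$, a finite product of the semisimple Lie algebras $\slalpha$, hence itself semisimple.

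Combining the two ingredients: $I$ is solvable (Lemma \ref{lemaqarbol}) and $k(Q_1 \parallel B)/I$ is semisimple, therefore $I = \operatorname{rad}\, k(Q_1 \parallel B)$, as desired. I do not expect a real obstacle here; the only point requiring a little care is the identification of $k(Q_1 \parallel B)/R$ with $k(Q_1 \parallel Q_1)$ as Lie algebras (not merely as vector spaces), but this is immediate from the fact that in Lemma \ref{decoqb} the complement $k(Q_1 \parallel Q_1)$ is already known to be a subalgebra, so the splitting of $R$ as an ideal induces a Lie algebra splitting.
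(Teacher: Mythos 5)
Your argument is correct, but it closes the reverse inclusion by a different standard device than the paper. Both proofs get $rad \, k(Q_1 \parallel Q_1) \oplus R \subseteq rad \, k(Q_1 \parallel B)$ from Lemma \ref{lemaqarbol}, and both rest on the splitting $k(Q_1 \parallel B) = k(Q_1 \parallel Q_1) \oplus R$ of Lemma \ref{decoqb}, with the projection $p$ onto the first summand a Lie algebra epimorphism (because $R$ is an ideal and $k(Q_1 \parallel Q_1)$ a subalgebra). The paper uses $p$ directly: since the image of a solvable ideal under an epimorphism is solvable, $p(rad \, k(Q_1 \parallel B)) \subseteq rad \, k(Q_1 \parallel Q_1)$, so any $w = v + z$ in the radical has $v = p(w)$ in $rad \, k(Q_1 \parallel Q_1)$ and hence lies in $rad \, k(Q_1 \parallel Q_1) \oplus R$. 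You instead pass to the quotient: $k(Q_1 \parallel B)/I \cong k(Q_1 \parallel Q_1)/rad \, k(Q_1 \parallel Q_1) \cong \prod_{\alpha \in S} sl_{|\alpha|}(k)$ by the third isomorphism theorem and Lemma \ref{matrices}, and then invoke the characterization of the radical as the solvable ideal with semisimple quotient. Both are one-step applications of standard facts once the splitting is in place; the paper's version needs only that images of solvable ideals are solvable, whereas yours additionally uses the semisimplicity of $\mathfrak{g}/rad \, \mathfrak{g}$ (concretely, of the product of the $sl_{|\alpha|}(k)$, with the $|\alpha|=1$ factors vanishing --- harmless in characteristic zero but worth a word in general). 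A small bonus of your route is that it exhibits the semisimple quotient explicitly, which the paper only records later in Proposition \ref{triangular}.
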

\begin{proof}
The above lemma gives one inclusion. 
From Lemma  \ref{decoqb}, we know that 
$k(Q_1\parallel B) = k(Q_1 \parallel Q_1) \oplus R$. 
Let $w$ be in $k(Q_1 \parallel B)$, there exists 
$v$ in $k(Q_1 \parallel Q_1)$ and $z$ in $R$ 
such that $w=v+z$. 
In order to prove the other inclusion, let us show the following: 
if $w$ is in $rad \, k(Q_1 \parallel B)$ then $v$ is in 
$rad \, k(Q_1 \parallel Q_1)$. 
Consider the projection map: 
$$p:k(Q_1 \parallel B) \rightarrow k(Q_1 \parallel Q_1).$$
It is clear that $p$ is a Lie algebra epimorphism, therefore 
$$
p(rad\, k(Q_1 \parallel B)) \subseteq rad \, k(Q_1 \parallel Q_1). 
$$ 
Therefore, if $w$ is in $rad \, k(Q_1 \parallel B)$ then 
$v=p(w)$ is in $rad \, k(Q_1 \parallel Q_1)$. 
\end{proof}


\subsection{Radical square zero. }
Now, we deal with a 
particular case of monomial algebras: 
those of radical square zero. 
In this case, $Z$ is the set of all paths of length two, i.e. $Z=Q_2$. 
The set of vertices and arrows forms a basis of $kQ/<Q_2>$, i.e. 
$B=Q_0 \cup Q_1$.  
In \cite{cibils}, Cibils  
described the Hochschild cohomology groups for these algebras 
using a complex which coincides with the complex 
induced by the Happel-Bardzell resolution. 

In the next paragraph, we recall 
the computation of the first Hochschild cohomology group for monomial algebras of radical square zero. Let us remark that the vector space  
${k(Q_0 \parallel Q_0 \cup Q_1)}$ is isomorphic as a vector space to 
${k(Q_0 \parallel Q_0) \oplus k(Q_0 \parallel Q_1)}$. 
Therefore, the beginning of the Happel-Bardzell complex becomes 
\[
\scalebox{0.88}{$
0 \rightarrow 
k(Q_0 \parallel Q_0) \oplus k(Q_0 \parallel Q_1)
\stackrel{\psi_0}{\longrightarrow} 
k(Q_1 \parallel Q_0) \oplus k(Q_1 \parallel Q_1)
\stackrel{\psi_1}{\longrightarrow} 
k(Q_2 \parallel Q_0) \oplus k(Q_2 \parallel Q_1) . 
$}
\]
The differential can be restated as follows: 
\[
\begin{array}{ccccccccc}
\psi_0&=&\matrizDcero 
\, & \text{ and } &\, 
\psi_1&=&\matrizDuno
\end{array}
\]
where
\begin{equation}\label{Dzero}
\begin{array}{cccl}
D_0: & k(Q_0 \parallel Q_0) & \rightarrow & k(Q_1 \parallel Q_1) \\
\, & (e,e) & \mapsto & \displaystyle{
\sum_{a \in Q_1e} (a, a) \: \: - \sum_{a \in eQ_1} (a, a )} 
\end{array}
\end{equation}
and 
\[
\begin{array}{cccl}
D_1: & k(Q_1 \parallel Q_0) & \rightarrow & k(Q_2 \parallel Q_1) \\
\, & (a,e) & \mapsto & \displaystyle{
\sum_{b \in Q_1e} (ba,b ) \: \: + \sum_{b \in eQ_1} (ab, b )} . \\
\, &\, &\, &\,
\end{array}
\]

\begin{remark}
We already know that 
$HH^0(A)$ is the center of $A$. 
From the above complex, 
we deduce that the center of $A$ 
is $ker \, \psi_0$, which is equal to \linebreak 
$ker \, D_0 \oplus k(Q_0 \parallel Q_1)$. 
Let us remark that 
$D_0(\sum_{e \in Q_0} (e,e) )=0$. 
Then the element 
$\sum_{e \in Q_0} (e,e)$, 
which is the unit of $A$, is in $ker \, D_0$, 
which is not surprising since 
the unit of $A$ is always in its center. 
\end{remark}

\begin{lemma}\label{dimImD}
Let $Q$ be a quiver. The dimension of $Im \, D_0$ is $|Q_0|-1$. 
\end{lemma}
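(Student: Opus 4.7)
The plan is to apply the rank-nullity theorem: since $k(Q_0 \parallel Q_0)$ has the natural basis $\{(e,e) : e \in Q_0\}$, its dimension is $|Q_0|$, so it suffices to show that $\ker D_0$ is one-dimensional.

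The remark preceding the lemma already supplies a nonzero element of the kernel, namely $u := \sum_{e \in Q_0} (e,e)$. So the task reduces to showing $\ker D_0 \subseteq k\cdot u$. For this I would take an arbitrary element $\sum_{e \in Q_0} \lambda_e (e,e) \in \ker D_0$ and expand its image using the formula \eqref{Dzero}:
\[
D_0\Bigl(\sum_{e \in Q_0} \lambda_e (e,e)\Bigr) = \sum_{e \in Q_0} \lambda_e \Bigl(\sum_{a \in Q_1 e} (a,a) - \sum_{a \in eQ_1}(a,a) \Bigr).
\]
Regrouping by arrow, each $a \in Q_1$ contributes to the sum $\sum_{a \in Q_1 e}(a,a)$ exactly when $t(a) = e$ and to $\sum_{a \in eQ_1}(a,a)$ exactly when $s(a) = e$. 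Since the family $\{(a,a) : a \in Q_1\}$ is linearly independent in $k(Q_1 \parallel Q_1)$, the vanishing of the image forces
\[
\lambda_{t(a)} - \lambda_{s(a)} = 0 \quad \text{for every } a \in Q_1.
\]

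Finally, I would invoke the standing assumption that the underlying graph of $Q$ is connected: given any two vertices $e, e' \in Q_0$, there is an undirected path between them along arrows of $Q$, and the relation $\lambda_{s(a)} = \lambda_{t(a)}$ propagates equality of the $\lambda$'s along this path, yielding $\lambda_e = \lambda_{e'}$. Hence $\sum_e \lambda_e (e,e)$ is a scalar multiple of $u$, so $\ker D_0 = k \cdot u$ has dimension $1$, and $\dim \operatorname{Im} D_0 = |Q_0| - 1$ as claimed.

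No step looks particularly obstructive; the only point requiring a small amount of care is the bookkeeping that translates the linear relation in $k(Q_1 \parallel Q_1)$ into the arrow-by-arrow condition $\lambda_{s(a)} = \lambda_{t(a)}$, after which connectedness of $Q$ closes the argument immediately.
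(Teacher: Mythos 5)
Your proof is correct and takes essentially the same route as the paper's: both reduce to showing that $\ker D_0$ is one-dimensional, spanned by $\sum_{e}(e,e)$, by reading off the relation $\lambda_{s(a)}=\lambda_{t(a)}$ from the coefficient of each $(a,a)$ and propagating it via connectedness of $Q$. The only cosmetic difference is that the paper treats the loop and multiple-loops quivers as a separate (trivial) case, whereas your argument handles them uniformly.
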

\begin{proof}
Suppose $Q$ is a loop or a multiple loops quiver  
then $D_0=0$ and $Im \, D_0$ has dimension $0$. 
So the result holds in this case since ${|Q_0|-1=0}$. 
Now, suppose $Q$ is not a loop nor a multiple loops quiver, then 
we will show that $ker \,  D_0$ is one dimensional. 
Let $x$ be in $ker \, D_0$, we write 
$x=\sum_{e \in Q_0} \, \lambda_e (e,e)$ with $\lambda_e$ in $k$. 
Since $D_0(x)=0$, 
$$
\sum_{e \in Q_0} \lambda_e \bigl( \sum_{a \in Q_1e} (a,a) - \sum_{a \in eQ_1} (a,a) \bigr )
$$
is zero. Let $a$ be an arrow such that $s(a) \ne t(a)$. 
Notice that the element $(a,a)$ appears in the above linear combination 
with coefficient $\lambda_{s(a)}-\lambda_{t(a)}$. 
Therefore $\lambda_{s(a)}=\lambda_{t(a)}$ for all $a$ such that $s(a) \ne t(a)$. 
We conclude that $\lambda_e=\lambda_{e'}$ for all $e,e'$ in $Q_0$ since $Q$ is connected. 
We infer that $ker \, D_0$ is the vector space generated by 
$\sum_{e \in Q_0} e$, the unit of $A$. 
Finally, the vector space $Im \, D_0$ 
has dimension $|Q_0|-1$ since 
$ker \, D_0$ is one dimensional. 
\end{proof}

In order to compute 
$HH^1(A)$, we have to determine the kernel of $\psi_1$ 
and the image of $\psi_0$. 
We will perform this computation in three  cases: 
first for the loop, 
then for the oriented cycle of length greater or equal two   
and finally for quivers that are not an oriented cycle.

For the {\bf{loop}}, let $e$ be the vertex and $a$ be the arrow. 
It is clear that $D_0=0$ and $D_1(a,e)=2(a^2,a)$. 
If $char k= 2$ then $D_1=0$ and 
therefore we conclude $$HH^1(A,A)=k(a,e) \oplus k(a,a).$$ 
If $chark \ne 2$ the map $D_1$ is clearly injective so 
$Ker \, D_1=0$.  So 
$Ker \, \psi_1=k(a,a)$. Since $Im \, \psi_0$ is zero,  
$$HH^1(A)=k(a,a).$$  

For the {\bf{oriented cycle}}, let $e_1, \dots e_N$ 
be the vertices and $a_1, \dots a_N$ be the arrows. 
We will suppose  $N \geq 2$ and $s(a_i)=e_i$. 
Then $D_1=0$, so $HH^1(A)$ is isomorphic to the quotient of  
$k(Q_1 \parallel Q_1)$ by $Im \, D_0$. Therefore, 
$$
HH^1(A) = \frac{\oplus_{i=1}^N k(a_i,a_i)}
{<(a_i,a_i)-(a_{i-1},a_{i-1})>_{i=2,\dots,N}} 
$$

For a quiver that is {\bf{not an oriented cycle}}, 
the map $D_1$ is injective, this was proven by Cibils 
in \cite{cibils}. Therefore, 
$$HH^1(A)= \frac{k(Q_1 \parallel Q_1)}{Im \, D_0}. $$ 

Since we have obtained the explicit computation of the first Hochschild cohomology group of a monomial algebra of radical square zero, 
we are able to study its Lie algebra structure. 
First, we fix some notation and we give some technical results. 

\begin{notation}
Denote $\chi(Q)$ the Euler characteristic 
of the underlying graph of the quiver $Q$, i.e. 
$$\chi(Q)=|Q_1|-|Q_0|+1.$$
Let us remark that 
if the underlying graph of $Q$ is a tree then $\chi(Q)=0$.
\end{notation}

\begin{lemma}\label{imd0rad}
Let $Q$ be a quiver that is not an oriented cycle.
Consider the Lie algebra $k(Q_1 \parallel Q_1)$. 
Then $Im \, D_0$ is an abelian ideal 
of $k(Q_1 \parallel Q_1)$. Therefore, 
$$ Im \, D_0 \subseteq rad \, k(Q_1 \parallel Q_1).$$

Moreover, if the underlying graph of 
$\overline{Q}$ is a tree then 
$$Im \, D_0 = rad \, k(Q_1 \parallel Q_1).$$
\end{lemma}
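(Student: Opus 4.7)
The plan is to first verify the inclusion $Im \, D_0 \subseteq \prod_{\alpha \in \overline{Q}_1} k \, I_\alpha$, which equals $rad \, k(Q_1 \parallel Q_1)$ by Lemma \ref{matrices}. Once this inclusion is in hand, both the abelianness and the ideal property of $Im \, D_0$ follow from structural properties of $\prod_{\alpha} k \, I_\alpha$: this subspace is already abelian (being a product of one-dimensional Lie algebras), and in fact a short direct computation from the combinatorial bracket formula shows that $[(b,b'),I_\alpha]_S = 0$ for every generator $(b,b')$ of $k(Q_1 \parallel Q_1)$ (the two contributions from $b \in \alpha$ and $b' \in \alpha$ cancel whenever $b \parallel b'$). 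Thus $\prod_{\alpha} k \, I_\alpha$ is central in $k(Q_1 \parallel Q_1)$, so every subspace of it is automatically a central abelian ideal.

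To establish the inclusion I would fix a vertex $e$ and regroup
\[
D_0(e,e) \;=\; \sum_{\alpha \in \overline{Q}_1} \left(\, \sum_{a \in \alpha,\ t(a)=e} (a,a) \; - \sum_{a \in \alpha,\ s(a)=e} (a,a) \,\right).
\]
The key observation is that all arrows in a fixed equivalence class $\alpha$ share a common source $s(\alpha)$ and target $t(\alpha)$. So the bracketed expression for each $\alpha$ reduces to one of four cases: it equals $0$ when $\alpha$ consists of loops at $e$ (the two inner sums cancel term by term), $+I_\alpha$ when $t(\alpha)=e \ne s(\alpha)$, $-I_\alpha$ when $s(\alpha)=e \ne t(\alpha)$, and $0$ otherwise. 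Consequently $D_0(e,e) \in \bigoplus_{\alpha} k \, I_\alpha$, which proves the inclusion, and in turn the first assertion.

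For the ``moreover'' part I would compare dimensions. When $\overline{Q}$ is a tree, its Euler characteristic vanishes, so $|\overline{Q}_1| = |\overline{Q}_0| - 1 = |Q_0|-1$, and Lemma \ref{matrices} gives $\dim \, rad \, k(Q_1 \parallel Q_1) = |\overline{Q}_1| = |Q_0|-1$. The tree hypothesis also forbids loops in $\overline{Q}$ and hence in $Q$, so $Q$ is neither a loop nor a multi-loop quiver, and Lemma \ref{dimImD} applies to give $\dim \, Im \, D_0 = |Q_0|-1$. The two dimensions coincide, and together with the inclusion already proved this forces $Im \, D_0 = rad \, k(Q_1 \parallel Q_1)$.

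The main obstacle is really only bookkeeping in the case analysis of the second paragraph: one has to handle loop classes at $e$ carefully so that they contribute zero rather than a spurious $\pm I_\alpha$. Beyond that, everything reduces to applying Lemma \ref{matrices}, Lemma \ref{dimImD}, and a dimension count.
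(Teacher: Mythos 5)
Your proposal is correct and follows essentially the same route as the paper: regroup $D_0(e,e)$ as a signed sum of the elements $I_\alpha$, check via the combinatorial bracket that each $I_\alpha$ is central in $k(Q_1 \parallel Q_1)$ (so $Im\,D_0$ is a central, hence abelian, ideal contained in $\prod_\alpha k\,I_\alpha = rad\,k(Q_1\parallel Q_1)$), and conclude the tree case by comparing $\dim Im\,D_0 = |Q_0|-1$ from Lemma \ref{dimImD} with $\dim rad\,k(Q_1\parallel Q_1)=|\overline{Q}_1|$. The only discrepancy is an overall sign in your expression for $D_0(e,e)$ relative to the paper's convention ($Q_1e$ denotes arrows with source $e$), which is immaterial to the image and to the argument.
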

\begin{proof}
First notice that  
$$D_0(e,e)= \sum_{\alpha \in \overline{Q}_1e}  I_{\alpha} -
\sum_{\alpha  \in e\overline{Q}_1} I_{\alpha}.$$ 
In order to show that 
$Im \, D_0$ is an ideal of $k(Q_1 \parallel Q_1)$, 
let us compute first $[\, I_{\alpha}\, , \, (x,x') \, ]_S$. 
If $x \notin \alpha$ then it is clear that 
$[\, I_{\alpha} \, , \,  (x,x') \, ]_S=0$. 
If $x \in \alpha$ then 
$[\, I_{\alpha} \, , \,  (x,x')\, ]_S= 
(x,x')-(x,x')=0.$ 
We conclude that $[\, D_0(e,e) \, , \,  (x,x')\, ]_S=0$ for all $e \in Q_0$ and for all 
$(x,x') \in k(Q_1 \parallel Q_1)$. Therefore, 
$[\, D_0(e,e) \, , \, w \, ]_S=0$ for all $w \in k(Q_1 \parallel Q_1)$. 
From this computation, it is clear that $Im \, D_0$ is an abelian ideal. 
For the last statement, recall that 
$rad \, k(Q_1 \parallel Q_1)$ is equal 
to $\prod_{\alpha \in \overline{Q}} I_{\alpha}$, 
so its dimension is $|\overline{Q}_1|$. 
From the above remark $Im \, D_0$ has dimension $|Q_0|-1$.  
Since $\overline{Q}$ is a tree, $|\overline{Q}_1|=|Q_0|-1$ and therefore 
both ideals $Im D_0$ and $rad \, k(Q_1 \parallel Q_1)$,  
are equal. 
\end{proof}

 We will use the following lemma as a tool to compute the radical of the Lie algebras that we are dealing with. 

\begin{lemma}\label{lemmaradical}
Let $\g$ be a Lie algebra and $I$ be a solvable ideal. 
Then 
\[
rad \, \frac{\g}{I} = 
\frac{rad \, \g}{I} \quad . 
\] 
\end{lemma}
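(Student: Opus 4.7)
The plan is to establish the equality by two inclusions, leveraging the two classical facts that (i) the radical $rad\,\g$ is the unique maximal solvable ideal of $\g$, so it contains every solvable ideal, and (ii) an extension of a solvable Lie algebra by a solvable ideal is itself solvable.

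First I would observe that, since $I$ is solvable and $rad\,\g$ is the maximal solvable ideal, $I \subseteq rad\,\g$. Hence the subspace $rad\,\g / I$ is well-defined and is an ideal of $\g/I$. Being a homomorphic image of the solvable Lie algebra $rad\,\g$, the quotient $rad\,\g/I$ is solvable, so by maximality of the radical of $\g/I$ we obtain the inclusion
\[
\frac{rad\,\g}{I} \subseteq rad\,\frac{\g}{I}.
\]

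For the reverse inclusion, let $\pi : \g \to \g/I$ be the canonical projection and set $J = \pi^{-1}\bigl(rad(\g/I)\bigr)$. By the correspondence theorem for ideals, $J$ is an ideal of $\g$ containing $I$, and it fits into a short exact sequence $0 \to I \to J \to rad(\g/I) \to 0$ in which both the kernel and the quotient are solvable. The key step is to conclude that $J$ itself is solvable: if the derived series of $rad(\g/I)$ vanishes at level $m$, then ${\mathscr D}^m(J) \subseteq I$, and since $I$ is solvable, ${\mathscr D}^{m+n}(J)=0$ for $n$ sufficiently large. Thus $J$ is a solvable ideal of $\g$, which forces $J \subseteq rad\,\g$. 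Applying $\pi$ and using $\pi(J) = rad(\g/I)$ yields
\[
rad\,\frac{\g}{I} = \pi(J) \subseteq \pi(rad\,\g) = \frac{rad\,\g}{I},
\]
completing the proof.

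The main obstacle, if one can call it that, lies in the invocation of the "solvable-by-solvable is solvable" principle for the preimage $J$; this is standard, and the short derived-series argument indicated above makes it explicit. Once this is in place, both inclusions follow immediately from maximality of the radical together with the ideal correspondence across the projection $\pi$.
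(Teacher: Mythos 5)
Your proposal is correct and follows essentially the same route as the paper: both directions are obtained from the maximality of the radical, and the key step — showing that the preimage $J$ of $rad(\g/I)$ is solvable via the derived series landing in $I$ — is exactly the argument given in the paper. No gaps.
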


\begin{proof}
Let us consider the canonical projection 
${p:\g \twoheadrightarrow \g/I }$, 
therefore \linebreak 
$
{p(rad \, \g )
\subseteq 
rad \,(\g /I) }
$ 
since the image of a solvable ideal is solvable. 
Since $I$ is solvable, it is included in $rad \, \g$ and therefore 
\[
\frac{rad \, \g }{I}=p(rad \, \g) \subseteq 
rad \, \frac{\g}{I}.
\] 
In order to prove the lemma we have to prove 
the equality.
To do so,  we use the bijective correspondence between 
the ideals of the quotient $\g/I$ and the ideals of $\g$ that contain $I$. 
Let us suppose that $J$ is the ideal 
of $\g$ which contains $I$ such that
\[
rad \, \frac{\g}{I} = \frac{J}{I}.
\]
Clearly, $J$ contains 
$rad \, \g$ using the above inclusion.
We will prove that ${J=rad \, \g}$. 
It is enough to see that $J$ is solvable, 
since if this is true then $J$ is included in  
$rad \, \g$ and we obtain the result. 
We know that $J/ I$ is a solvable ideal of 
$\g / I$, so 
${\mathscr{D}}^l(J / I)=0$ for some $l$.
Let us also notice that 
\[
{\mathscr{D}}^l(J/I)=\frac{{\mathscr{D}}^l(J)+ I}{I}.
\]
Then ${\mathscr{D}}^l(J) \subseteq I$. 
This implies  
\[
[{\mathscr{D}}^l(J) \, , \,  {\mathscr{D}}^l(J)] 
\subseteq 
[I \, ,\, I]
\] 
and therefore we infer that  ${\mathscr{D}}^{l+l'}(J)=0$ for some $l'$. 
\end{proof}

\begin{remark}
Let $Q$ be a quiver that is not an oriented cycle.
We apply the above lemma 
to $\g=k(Q_1 \parallel Q_1)$ and to $I=Im \, D_0$. 
Then 
\[
rad \, \frac{k(Q_1 \parallel Q_1)}{Im \, D_0} = 
\frac{rad \, k(Q_1 \parallel Q_1)}{Im D_0} \quad . 
\] 
\end{remark}

\begin{notation}
We will denote $S$ the set of 
non-trivial equivalence classes:
\[
S=\{ \alpha \in \overline{Q}_1 \, \mid \, |\alpha|>1\} 
\]
\end{notation}

\begin{remark}
If $\alpha$ is in $S$ denote $sl_{|\alpha|}(k)$ the simple 
Lie algebra of $|\alpha| \times |\alpha|$ matrices of trace zero. 
It is clear that $\gl_{\alpha} / k\, I_{\alpha} $ is isomorphic as a
Lie algebra to $sl_{|\alpha|} (k)$ if the characteristic of the field is zero. 
\end{remark}

\begin{proposition}\label{resultadotecnico}
Assume that the field $k$ is algebraically closed and of characteristic zero. 
Let $Q$ be a quiver which is not an oriented cycle. Then,
\[
\frac{k(Q_1 \parallel Q_1)}{Im \, D_0} \cong 
\prod_{\alpha \in S} sl_{|\alpha|} (k) 
\, \times \, k \,^{\chi(\overline{Q})} 
\] 
as Lie algebras. 
\end{proposition}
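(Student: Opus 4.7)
The whole computation reduces to rearranging $k(Q_1 \parallel Q_1)$ as a direct product of Lie algebras in which the ideal $Im\, D_0$ sits entirely inside one (abelian) factor, and then quotienting factor by factor. No structural subtlety beyond Levi-type bookkeeping is required, because all the hard work is already packaged in Lemmas \ref{matrices}, \ref{dimImD} and \ref{imd0rad}.

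\textbf{Step 1: refine the decomposition of $k(Q_1 \parallel Q_1)$.} By Lemma \ref{matrices} we have $k(Q_1 \parallel Q_1) = \prod_{\alpha \in \overline{Q}_1} \gl_{\alpha}$ as Lie algebras. For a single class $\alpha$, $I_{\alpha}$ spans the center of $\gl_{\alpha}$ and, since $k$ has characteristic zero, the trace/scalar splitting gives an internal direct product of Lie ideals
\[
\gl_{\alpha} = \slalpha \oplus k\, I_{\alpha},
\]
where $\slalpha$ denotes the kernel of the trace map; for $|\alpha|=1$ this summand is $0$ and for $|\alpha|\geq 2$ it is isomorphic to $sl_{|\alpha|}(k)$. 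Assembling these over all $\alpha \in \overline{Q}_1$ yields
\[
k(Q_1\parallel Q_1) \;\cong\; \Big(\prod_{\alpha \in S} sl_{|\alpha|}(k)\Big) \,\times\, \Big(\prod_{\alpha \in \overline{Q}_1} k\,I_{\alpha}\Big),
\]
as a direct product of Lie algebras, the second factor being abelian.

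\textbf{Step 2: project out $Im\, D_0$.} The formula recalled in the proof of Lemma \ref{imd0rad} shows that $D_0(e,e)$ is a $\mathbb{Z}$-linear combination of the $I_\alpha$'s, so $Im\, D_0 \subseteq \prod_{\alpha\in\overline{Q}_1} k\, I_\alpha$. Because this containment lies entirely in the second factor of the product in Step 1, and that factor commutes with $\prod_{\alpha\in S} sl_{|\alpha|}(k)$, passing to the quotient splits as
\[
\frac{k(Q_1\parallel Q_1)}{Im\, D_0} \;\cong\; \Big(\prod_{\alpha\in S} sl_{|\alpha|}(k)\Big)\,\times\,\frac{\prod_{\alpha\in\overline{Q}_1} k\, I_\alpha}{Im\, D_0}.
\]

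\textbf{Step 3: identify the abelian factor.} The remaining quotient is abelian as a quotient of an abelian Lie algebra, so it is determined up to isomorphism by its dimension. Counting,
\[
\dim\frac{\prod_{\alpha\in\overline{Q}_1} k\, I_\alpha}{Im\, D_0} \;=\; |\overline{Q}_1| - \dim\, Im\, D_0 \;=\; |\overline{Q}_1| - (|Q_0|-1) \;=\; \chi(\overline{Q}),
\]
using Lemma \ref{dimImD} for $\dim Im\, D_0$ (which applies since, $Q$ not being an oriented cycle, neither is $\overline{Q}$ and the argument of that lemma is unchanged). Hence this factor is isomorphic to $k^{\chi(\overline{Q})}$, and combining with Step 2 proves the proposition.

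\textbf{Expected main obstacle.} There is no deep obstacle; the only point requiring care is to check that the splitting $\gl_{\alpha}=\slalpha\oplus k\,I_{\alpha}$ really is a Lie algebra direct product and not merely a vector space splitting, which is where the characteristic-zero hypothesis enters (so that $sl$ is a Lie ideal complementary to the scalars), and to confirm that $Im\,D_0$ lies in the commutative part so that the product structure is preserved by the quotient.
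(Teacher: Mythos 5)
Your proof is correct, and it reaches the conclusion by a slightly more elementary route than the paper. The paper first computes the radical of the quotient via Lemma \ref{lemmaradical} (so that $rad\,(k(Q_1\parallel Q_1)/Im\,D_0)=rad\,k(Q_1\parallel Q_1)/Im\,D_0$), identifies the semisimple part as $k(Q_1\parallel Q_1)/rad\,k(Q_1\parallel Q_1)\cong\prod_{\alpha\in S}sl_{|\alpha|}(k)$, invokes Levi's theorem to get a semidirect product, and only then checks that the action on the radical is trivial so that the product is in fact direct. You instead exhibit an internal Lie-algebra direct product $\gl_{\alpha}=sl_{|\alpha|}(k)\oplus k\,I_{\alpha}$ (valid in characteristic zero), observe that $Im\,D_0$ lands entirely in the abelian factor $\prod_{\alpha}k\,I_{\alpha}$, and quotient factor by factor; this makes Levi's theorem and Lemma \ref{lemmaradical} unnecessary here, at the cost of being special to this situation --- the paper's radical-plus-Levi template is reused verbatim for Proposition \ref{triangular}, where the complement of the semisimple part is no longer central and the product is genuinely only semidirect. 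Both arguments ultimately rest on the same three inputs (Lemma \ref{matrices}, the formula $D_0(e,e)=\sum_{\alpha\in\overline{Q}_1e}I_{\alpha}-\sum_{\alpha\in e\overline{Q}_1}I_{\alpha}$, and the dimension count of Lemma \ref{dimImD}). One small remark: your parenthetical about needing $\overline{Q}$ not to be an oriented cycle for Lemma \ref{dimImD} is unnecessary --- that lemma is stated and proved for an arbitrary connected quiver $Q$ and is applied to $D_0$ on $k(Q_0\parallel Q_0)$ directly, so no transfer to $\overline{Q}$ is involved.
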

\begin{proof}
First, we will compute the semisimple part. 
To do so we have to compute the quotient of $k(Q_1 \parallel Q_1)/ Im \, D_0$ 
by its radical. The radical is given by the quotient 
$rad \, k(Q_1 \parallel Q_1) / Im \, D_0$ 
using the above lemma. 
Therefore, it is clear that the semisimple part is 
$k(Q_1 \parallel Q_1) / rad \, k(Q_1 \parallel Q_1)$ 
which is isomorphic to $\prod_{\alpha \in S} sl_{|\alpha|} (k)$.  
The quotient $rad \, k(Q_1 \parallel Q_1) / Im \, D_0$ 
is isomorphic to the quotient of 
$\prod_{\alpha \in \overline{Q}_1} k \, I_{\alpha}$ by $Im \, D_0$ 
which is  in turn isomorphic to $k \,^{\chi(\overline{Q})}$. 
The last assertion follows from the fact that  
$\prod_{\alpha \in \overline{Q}_1} k \, I_{\alpha}$ 
is abelian of dimension $|\overline{Q}_1|$ and 
$Im \, D_0$ is of dimension $|Q_0|-1$. 
Once we have the semisimple part and the radical,  
we apply Levi's theorem which gives us the decomposition. 
We will show that the product is direct. 
Let $\overline{x}$ be in $k(Q_1 \parallel Q_1)/ Im \, D_0$ 
and $\overline{y}$ be in its radical where 
$x$ is in $k(Q_1 \parallel Q_1)$ 
and $y$ is in $\prod_{\alpha \in \overline{Q}_1} k \, I_{\alpha}$.  
Using the combinatorial bracket, $[x,y]_S=0$ and 
therefore $[\,\overline{x} \, ,\,\overline{y} \, ]_S=0$.
\end{proof}

The computation of $HH^1(A)$ and the study of the Lie algebra structure of $k(Q_1 \parallel Q_1)$ by $Im \, D_0$ provides the following result:

\begin{theorem}\label{hhuno}
Let $A=kQ/<Q_2>$ be a monomial algebra of radical square zero 
where $k$ is an algebraically closed field of characteristic zero and $Q$ is a finite connected quiver. 
Then 
\[
HH^1(A) \cong \prod_{\alpha \in S } sl_{|\alpha|} (k)
\; \times  \; k \,^{\chi(\overline{Q})} \quad .
\]
Therefore, $HH^1(A)$ is reductive. 
\end{theorem}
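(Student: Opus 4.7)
The plan is to assemble the theorem from the three case-by-case computations of $HH^1(A)$ performed in the excerpt, combined with the structural result Proposition \ref{resultadotecnico}. In the one case where parallel arrows can produce nonabelian simple factors --- namely, when $Q$ is not an oriented cycle --- all the Lie-theoretic work has already been done. The two remaining cases (the single loop and the oriented cycle of length at least two) contribute only an abelian factor, and the task there is to match the combinatorial invariants on both sides of the claimed isomorphism.

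First I would dispatch the two exceptional cases. For the single loop, the excerpt gives $HH^1(A) = k(a,a) \cong k$ (using that the characteristic is zero, so that $D_1$ is injective). Since there are no parallel arrows, $\overline{Q} = Q$, so $S = \emptyset$ and $\chi(\overline{Q}) = 1 - 1 + 1 = 1$; the right-hand side of the statement evaluates to $k^{1} = k$. For an oriented cycle of length $N \geq 2$, the excerpt gives $HH^1(A) \cong k$ as well. Again distinct arrows in such a cycle are never parallel, so $\overline{Q} = Q$, $S = \emptyset$, and $\chi(\overline{Q}) = N - N + 1 = 1$, matching the right-hand side.

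When $Q$ is not an oriented cycle, the excerpt shows that $HH^1(A) \cong k(Q_1 \parallel Q_1)/Im\, D_0$, and Proposition \ref{resultadotecnico} identifies this quotient as a Lie algebra with $\prod_{\alpha \in S} sl_{|\alpha|}(k) \times k^{\chi(\overline{Q})}$. This case covers the multiple loops quiver (for $r \geq 2$) and every other connected quiver outside the two cycle cases above, so the isomorphism of the theorem is established in full generality.

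Reductivity is then immediate from the structural form: over a field of characteristic zero each $sl_{|\alpha|}(k)$ is simple, so $\prod_{\alpha \in S} sl_{|\alpha|}(k)$ is semisimple; $k^{\chi(\overline{Q})}$ is abelian; and the direct product of a semisimple and an abelian Lie algebra is reductive by definition. There is no serious obstacle to this proof: the content has been concentrated in the explicit computation of $HH^1(A)$ and in Proposition \ref{resultadotecnico}, and what remains is bookkeeping of the invariants $S$, $|\alpha|$, and $\chi(\overline{Q})$ in the two degenerate cycle cases.
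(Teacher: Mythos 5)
Your proof is correct and follows essentially the same route as the paper's own argument: the same three-case split (single loop, oriented cycle of length at least two, and everything else), the same verification that $S=\emptyset$ and $\chi(\overline{Q})=1$ in the two cycle cases, and the same appeal to Proposition \ref{resultadotecnico} in the generic case. No gaps.
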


\begin{proof}
First, if $Q$ is the loop we have shown that 
$HH^1(A)=k(a,a)$, 
which is clearly isomorphic to $k$.   
Moreover, since $\chi(\overline{Q})=1$ and $S=\o$ 
we obtain the above isomorphism. 
Second, 
if $Q$ is an oriented cycle of length $\geq 2$, 
we have proved that $HH^1(A)$ is the quotient of 
$\oplus_{i=1}^N k(a_i,a_i)$ by the two sided ideal generated by elements of the form $(a_i,a_i)-(a_{i-1},a_{i-1})$, which is the image of $D_0$. 
The numerator is abelian of dimension $N$ (where $N$ is the length of the oriented cycle) 
and the denominator is of dimension $N-1$ (see Lemma \ref{dimImD}).
Then $HH^1(A)$ is one dimensional, 
therefore isomorphic to $k$.  Moreover, since 
$\chi(\overline{Q})=1$ and $S=\o$ 
we obtain the above isomorphism for the oriented cycle.  
Finally, if $Q$ is not the oriented cycle, then $HH^1(A)$ is the quotient of $k(Q_1 \parallel Q_1)$ by $Im \, D_0$. We apply the corollary \ref{resultadotecnico} to obtain the above isomorphism.  
\end{proof}

\begin{example}
The above result implies 
that $HH^1(k[x]/<x^2>)=k$, 
which is the one dimensional abelian Lie algebra. 
In \cite{happel} there is a description of the derivations of this algebra.
\end{example}
\begin{example}
Let $Q$ be the multiple arrows quiver: 
this is the quiver with two vertex and several arrows in the same direction between this two vertices. 
Assume that the number of arrows is greater or equal to two. 
If $A=kQ$, 
the above theorem implies that $HH^1(A)$ 
is isomorphic to $sl_r (k)$ 
where $r$ is the number of arrows. This computation can be obtained from the results in \cite{strametz}.
\end{example}
\begin{example}
Let $Q$ be the multiple loops quiver: 
this is the quiver which has one vertex and several loops. 
Assume that the number of loops is greater or equal to two, 
 $\overline{Q}$ is the one loop quiver.  
If $A=kQ/<Q_2>$, 
the above result implies that $HH^1(A)$ 
is isomorphic to $sl_r (k) \times k\cong gl_{r}(k)$ 
where $r$ is the number of
loops if $k$ is a field of characteristic zero and algebraically
closed. Denote $HH^1(A)_{ss}$ 
the semisimple part of a Lie algebra, 
this is the quotient by its solvable radical. 
Then $HH^1(A)_{ss}=sl_r (k)$. In a previous paper, \cite{sele}, we have obtained this result for $n=2$.
\end{example}

The next corollary of the above theorem gives  Strametz' conditions for semisimplicity. 
We will study those conditions in the next section. 
\begin{corollary}
Let $A=kQ/<Q_2>$ be a monomial algebra of radical square zero. 
Then $HH^1(A)$ is semisimple if and only if $S \ne \o$ and 
the underlying graph of the quiver $\overline{Q}$ is a tree. 
\end{corollary}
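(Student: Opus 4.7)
The plan is to deduce this corollary essentially as a direct reading of Theorem \ref{hhuno}, which gives the decomposition
\[
HH^1(A) \cong \prod_{\alpha \in S} sl_{|\alpha|}(k) \; \times \; k\,^{\chi(\overline{Q})}.
\]
Since every $\alpha \in S$ satisfies $|\alpha| \geq 2$, each factor $sl_{|\alpha|}(k)$ is a simple Lie algebra (characteristic zero is used here), so the first factor is semisimple whenever $S$ is non-empty, while the second factor is abelian and therefore contained in the solvable radical.

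The first step I would carry out is the computation of $rad \, HH^1(A)$. Using that taking the radical commutes with finite direct products (as already invoked in Lemma \ref{matrices}), I obtain that $rad \, HH^1(A) = k\,^{\chi(\overline{Q})}$. Hence $HH^1(A)$ has trivial radical if and only if $\chi(\overline{Q}) = 0$.

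The second step is to translate the vanishing of the Euler characteristic into the tree condition. Since $Q$ is connected, so is $\overline{Q}$, and for a connected graph the equality $\chi(\overline{Q}) = |\overline{Q}_1| - |Q_0| + 1 = 0$ is precisely the characterization of being a tree.

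Finally, I would address the degenerate case. If $\overline{Q}$ is a tree and $S = \emptyset$, the decomposition collapses to $HH^1(A) = 0$, which under the convention that a semisimple Lie algebra is a nonzero direct product of simple ones is excluded; this forces $S \neq \emptyset$. Conversely, if both $S \neq \emptyset$ and $\overline{Q}$ is a tree hold, then $HH^1(A) \cong \prod_{\alpha \in S} sl_{|\alpha|}(k)$ is a non-trivial product of simple Lie algebras, hence semisimple. There is no serious obstacle here, as the combinatorial work has been front-loaded into Theorem \ref{hhuno}; the only mildly delicate point is the convention on the zero algebra, which is why the necessity of $S \neq \emptyset$ must be stated explicitly.
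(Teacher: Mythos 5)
Your proposal is correct and is essentially the argument the paper intends: the corollary is stated immediately after Theorem \ref{hhuno} precisely so that it can be read off from the decomposition $HH^1(A)\cong \prod_{\alpha\in S} sl_{|\alpha|}(k)\times k\,^{\chi(\overline{Q})}$, with the radical identified as the abelian factor and $\chi(\overline{Q})=0$ equivalent to $\overline{Q}$ being a tree by connectedness. Your explicit handling of the degenerate case $S=\emptyset$ (excluding the zero Lie algebra from being semisimple) is a reasonable and correct filling-in of a detail the paper leaves implicit.
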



\subsection{Triangular complete monomial.}
In this section, 
we will study the Lie algebra structure of 
the first Hochschild cohomology of 
a triangular complete monomial algebra. 
Let $A=kQ/I$ be any finite dimensional algebra, 
where $I$ is an admissible ideal of the path algebra $kQ$. 

\begin{definition}[Triangular algebra]
If $Q$ has no oriented cycles, 
we say $A$ is a {\it{triangular algebra}}.
\end{definition}

\begin{definition}[Complete monomial algebra]
Let $A=kQ/<Z>$ be a monomial algebra. 
We say that $A$ is {\it{complete}} if and only if  
any path of length at least two which is parallel to a path in $<Z>$ 
is also in $<Z>$. 
\end{definition}

We compute the first Hochschild cohomology group 
of triangular complete monomial algebras 
using the complex induced from the Bardzell-Happel resolution. 
Since $A$ is triangular, 
the set $Q_0 \parallel B$ is in
fact $Q_0 \parallel Q_0$. 
Since $A$ is complete monomial 
then for all $p$ in $Z$ and 
$(a, \gamma)$ in $Q_1 \parallel B$,  $p^{(a,\gamma)}$ is in $<Z>$ 
by definition. 
Consider the map
$\psi_1:k(Q_1 \parallel B) \rightarrow k(Z \parallel B)$ 
appearing in the complex induced by the Bardzell-Happel resolution, 
we obtain that $\psi_1(a,\gamma)=0$ 
for all $(a, \gamma)$ in $Q_1 \parallel B$. 
Moreover, $Z \parallel B=Z \parallel Q_1$. 
Therefore, 
the complex induced
by the Bardzell-Happel projective resolution is:
\[
0 \longrightarrow k(Q_0 \parallel Q_0) 
\stackrel{\psi_0}{\longrightarrow} k(Q_1 \parallel B)
\stackrel{0}{\longrightarrow} k(Z \parallel Q_1)
\]
where the map $\psi_0$ 
is in fact the map $D_0$. 
Therefore, 
$$HH^1(A)= \frac{k(Q_1 \parallel B)}{Im \, D_0} \, .$$

The following lemma 
is analogue to Lemma  \ref{imd0rad} for 
radical square zero algebras. 
We show that the denominator 
of the above quotient is 
in the radical of the numerator. 

\begin{lemma} 
Let $Q$ be a quiver without oriented cycles. 
Consider the Lie algebra $k(Q_1 \parallel B)$. 
Then $Im \, D_0$ is an abelian ideal, 
therefore 
$$Im \, D_0 \subseteq rad \, k(Q_1 \parallel B).$$
\end{lemma}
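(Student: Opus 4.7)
The plan is to prove something slightly stronger than claimed, namely that $\operatorname{Im} D_0$ is central in $k(Q_1 \parallel B)$, which immediately gives that it is an abelian ideal and hence is contained in $\operatorname{rad} k(Q_1 \parallel B)$. Writing $D_0(e,e) = \sum_{\alpha \in \overline{Q}_1 e} I_\alpha - \sum_{\alpha \in e\overline{Q}_1} I_\alpha$ as in Lemma \ref{imd0rad}, the task reduces to computing $[I_\alpha, (x, \gamma)]_S$ for an arbitrary generator $(x, \gamma) \in Q_1 \parallel B$ and then showing that the appropriate alternating sum over $\alpha$ vanishes.

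The first step is the local computation. Writing $\gamma = c_1 \cdots c_m$ and unpacking the combinatorial bracket for the length-one cochain $(a,a)$ with $a \in \alpha$, each diamond $\gamma \diamantei a$ reduces either to $\gamma$ itself (when $c_i = a$) or to $0$, and $a \diamantej \gamma = \gamma$ (when $a = x$). Summing over $a \in \alpha$ yields
\[
[I_\alpha, (x, \gamma)]_S = \bigl(n_\alpha(\gamma) - \chi_\alpha(x)\bigr)(x, \gamma),
\]
where $n_\alpha(\gamma)$ counts the arrows of $\gamma$ lying in the class $\alpha$ and $\chi_\alpha(x)$ equals $1$ if $x \in \alpha$ and $0$ otherwise. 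Note that $k(Q_1 \parallel Q_0) = 0$ here, since an acyclic quiver has no loops, so one may assume $l(\gamma) \geq 1$.

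Summing the identity above over the classes with target (resp.\ source) $e$, the $\chi_\alpha$ contribution collapses immediately to $\delta_{t(x)}^e - \delta_{s(x)}^e$ because $x$ belongs to exactly one equivalence class. The crucial point is the $n_\alpha$ contribution, which equals $N^+(e, \gamma) - N^-(e, \gamma)$, the number of arrows of $\gamma$ with target $e$ minus the number with source $e$. Because $Q$ has no oriented cycles, no vertex is visited twice by $\gamma$; hence the contributions telescope, so that every interior vertex contributes $+1$ from one arrow and $-1$ from the next, leaving $\delta_{t(\gamma)}^e - \delta_{s(\gamma)}^e$. Since $x \parallel \gamma$ forces $s(x) = s(\gamma)$ and $t(x) = t(\gamma)$, the two pieces match and $[D_0(e,e), (x, \gamma)]_S = 0$.

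The main obstacle is precisely this telescoping argument: one must carefully invoke the no-oriented-cycle hypothesis to match incoming and outgoing arrows at each interior vertex of $\gamma$, so that only the endpoints survive. Once the bracket with every $D_0(e,e)$ is shown to annihilate every generator of $k(Q_1 \parallel B)$, centrality, the ideal property, and abelian-ness all follow without further work.
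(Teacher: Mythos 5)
Your proof is correct and follows essentially the same route as the paper: both establish that $Im\,D_0$ is central by computing $[\,I_\alpha\,,\,(x,\gamma)\,]_S$ for each class $\alpha$ incident to $e$ and showing the signed sum cancels, with acyclicity guaranteeing that the vertices along $\gamma$ are distinct. The only difference is presentational — you package the paper's four-way case analysis (vertex off $\gamma$, interior vertex, each endpoint) into the single counting identity $[\,I_\alpha\,,\,(x,\gamma)\,]_S=\bigl(n_\alpha(\gamma)-\chi_\alpha(x)\bigr)(x,\gamma)$ followed by a telescoping sum, which is a cleaner but equivalent organization.
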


\begin{proof}
Let $e$ be in $Q_0$ and let 
$(x, \gamma)$ be in $Q_1 \parallel B$, 
i.e  the arrow $x$ is parallel to 
$\gamma$, a path in $B$ of length $n$. 
We will show that $[\, D_0(e,e) \, , \,  (x, \gamma)\, ]_S=0$. 
Since 
$$D_0(e,e)=\sum_{\alpha \in \overline{Q}_1e}  I_{\alpha} -
\sum_{\alpha  \in e\overline{Q}_1} I_{\alpha}.$$  
we will compute $[\, I_\alpha \, ,\,  (x,\gamma)\, ]_S$ 
for $\alpha$ in $\overline{Q}_1$ such that 
$s(\alpha)=e$ or $t(\alpha)=e$.
Assume $\gamma=y_1 \cdots y_i \cdots y_n$. 
Let us denote $e_0=t(y_1)=t(x)$, 
$e_i=s(y_i)=t(y_{i+1})$ for $i=1, \dots,n-1$ 
and $e_n=s(y_n)=s(x)$. 
It is clear that all $e_i$ for $i=0,\dots,n$ are different. 
First, suppose $e$ is a vertex with $e \ne e_i$ 
for all $i=0,\cdots, n$. 
Let $a$ be an arrow such that $s(a)=e$
(or $t(a)=e$), 
$[\, (a,a)\, ,\,  (x,\gamma)\, ]_S=0$ since 
$a$ is neither $x$ nor any $y_i$.  
Therefore $[\, I_\alpha \, , \, (x,\gamma)\, ]_S=0$ for all 
$\alpha$ in $\overline{Q}_1e$ and for all $\alpha$ in $e\overline{Q}_1$. 
We conclude that 
if $e \ne e_i$ then $[\, D_0(e,e) \, , \,  (x, \gamma) \, ]_S=0$. 
Suppose now that $e=e_i$ for some $i=1,\dots, n-1.$ 
Let us remark that 
$y_i$ which is in the decomposition of $\gamma$, 
is an arrow whose source is $e$. 
Notice also that the arrow $y_{i+1}$ which 
is in the decomposition of $\gamma$ too, is 
an arrow whose target is $e$. 
A simple calculation gives us that: 
\[
\begin{array}{lcc}
\, [\, ( y_i, y_i) \, , \, (x,\gamma) \, ]_S& = & (x,\gamma) \\
\, [\, ( y_{i+1} \, , \,  y_{i+1}),(x,\gamma) \, ]_S& = & (x,\gamma) .
\end{array}
\]
Denote by $\alpha_i$ the equivalence class of $y_i$ and 
denote also by $\alpha_{i+1}$  the equivalence class of $y_{i+1}$.
Observe that $\alpha_i$ is in $\overline{Q}_1e$ while 
$\alpha_{i+1}$ is in $e\overline{Q}_1$.  
It is clear that $[\, I_{\alpha_i} \, , \, (x,\gamma)\, ]_S=
[\, I_{\alpha_{i+1}} \, , \, (x,\gamma)\,]_S = (x, \gamma)$. 
Now, let $\alpha$ be in $\overline{Q}_1$ such that  
$s(\alpha)=e$ (resp. $t(\alpha)=e)$, but $\alpha$ is different 
from $\alpha_i$ (resp. $\alpha_{i+1})$.  
Then  $[\, I_{\alpha}\, , \, (x,\gamma)\, ]_S=0$ 
since for all $a$ in $\alpha$, $a$ is neither $x$ nor any $y_i$. 
We can conclude now that if $e=e_i$ for some $i=1,\dots,n-1$, then 
$$[\, D_0(e,e)\, , \, (x,\gamma)\, ]_S=
[\, I_{\alpha_i} \, , \, (x,\gamma) \, ]_S - [\, I_{\alpha_{i+1}} \, , \, (x,\gamma)\, ]_S=
(x,\gamma)-(x,\gamma)=0.$$ 
Finally, suppose $e=e_0$;
both arrows $y_n$ and $x$ have source $e$. 
A simple calculation gives us that: 
\[
\begin{array}{lcc}
\, [\, ( y_n, y_n) \, , \, (x,\gamma) \, ]_S& = & (x,\gamma) \\
\, [\, ( x, x) \, , \, (x,\gamma)\, ]_S& = & -(x,\gamma) .
\end{array}
\]
Denote $\alpha_n$ the equivalence class of $y_n$ 
and denote  $\alpha_x$ the equivalence class of $x$. 
Both $\alpha_n$ and $\alpha_x$ are in $\overline{Q}_1e$. 
It is clear that $[\, I_{\alpha_n} \, , \, (x,\gamma)\, ]_S=(x,\gamma)$ and that 
$[\, I_{\alpha_x}\, , \, (x,\gamma)\, ]_S=-(x,\gamma).$ As for the previous case, 
for all $\alpha $ in $\overline{Q}_1$ such that  
$s(\alpha)=e$ (resp. $t(\alpha)=e)$, 
but different from  $\alpha_n$ and from $\alpha_x$, 
we infer  $[\, I_{\alpha}\, , \, (x,\gamma)\,]_S=0$. 
We can conclude now that if 
$e=e_0$, 
$$[\, D_0(e,e) \, , \, (x,\gamma) \, ]_S=
[\, I_{\alpha_n} \, , \, (x,\gamma)\, ]_S + [\, I_{\alpha_{x}} \, , \, (x,\gamma)\, ]_S=
(x,\gamma)-(x,\gamma)=0.$$ 
A similar argument, give us that for $e=e_n$ 
$$[\, D_0(e,e) \, , \, (x,\gamma)\, ]_S=-[\, I_{\alpha_0} \, , \, (x,\gamma)\,]_S 
- [\, I_{\alpha_{x}} \, , \, (x,\gamma)\, ]_S=
-(x,\gamma)+(x,\gamma)=0$$
where $\alpha_0$ is the equivalence class of $y_1$. 
Both  $\alpha_0$ and $\alpha_x$ are in $e\overline{Q}_1.$
\end{proof}

\begin{lemma}
Let $Q$ be a quiver without oriented cycles. 
Consider the Lie algebra $k(Q_1 \parallel B)$. 
Then  
\[
rad \, \frac{k(Q_1 \parallel B)}{Im \, D_0} = 
\frac{rad \, k(Q_1 \parallel B)}{Im D_0} \quad . 
\] 
\end{lemma}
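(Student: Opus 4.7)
The plan is to recognize the statement as an immediate corollary of Lemma \ref{lemmaradical} applied to $\g = k(Q_1 \parallel B)$ and $I = Im\, D_0$. Lemma \ref{lemmaradical} gives, for any Lie algebra $\g$ and any solvable ideal $I$, the equality $rad\,(\g/I) = (rad\, \g)/I$. So the only hypothesis I have to check is that $Im\, D_0$ is a solvable ideal of $k(Q_1 \parallel B)$.

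This was just established in the lemma immediately preceding the present one: under the assumption that $Q$ has no oriented cycles, $Im\, D_0$ was shown to be an \emph{abelian} ideal of $k(Q_1 \parallel B)$. An abelian Lie algebra is trivially solvable (its first derived subalgebra already vanishes), hence $Im\, D_0$ is a solvable ideal. With this hypothesis in place, Lemma \ref{lemmaradical} applies verbatim and yields the desired equality.

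There is no real obstacle at this stage, since the substantive work has been absorbed into the preceding lemma, whose explicit bracket computations $[D_0(e,e),(x,\gamma)]_S = 0$ rely heavily on the absence of oriented cycles (which guarantees that the vertices $e_0,\dots,e_n$ along any path $\gamma$ are pairwise distinct, so that the contributions from the $I_{\alpha_i}$'s that act nontrivially on $(x,\gamma)$ pair up and cancel). Given that input, together with the general Lemma \ref{lemmaradical}, the proof of the present lemma is essentially a one-line invocation.
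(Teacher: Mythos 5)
Your proof is correct and coincides with the paper's: the paper's own proof is the one-line "We apply Lemma \ref{lemmaradical}," with the solvability of $Im\, D_0$ supplied by the immediately preceding lemma exactly as you note. You have simply made explicit the hypothesis-checking that the paper leaves implicit.
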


\begin{proof} 
We apply Lemma  \ref{lemmaradical}.  
\end{proof}

Recall that $R$ is the solvable ideal of $k(Q_1 \parallel B)$ given by the direct sum of all $k(Q_1 \parallel Q_i \cap B)$ where $i$ goes  from $2$ to $N$, the maximum of 
all length of non-zero paths in A. 

\begin{proposition}\label{triangular}
Let $A=kQ/<Z>$ be a triangular complete monomial algebra where 
$k$ is an algebraically closed field of characteristic zero. Then
\[
HH^1(A) \cong 
\prod_{\alpha \in S} sl_{|\alpha|} (k) 
\, \rtimes \, \big( k \,^{\chi(\overline{Q})}
\oplus R \big). 
\]
Moreover 
$[\, sl_{|\alpha|} (k) \, , \, k \,^{\chi(\overline{Q})}
\oplus R  \, ] \subseteq R.$ 
\end{proposition}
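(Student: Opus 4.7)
The plan is to follow the pattern established for radical square zero monomial algebras in Theorem \ref{hhuno}, now accommodating the nontrivial summand $R$. First I would use the complex computation recorded immediately before the statement, namely $HH^1(A) = k(Q_1 \parallel B)/Im\, D_0$, together with the vector space decomposition $k(Q_1 \parallel B) = k(Q_1 \parallel Q_1) \oplus R$ of Lemma \ref{decoqb}. Because $D_0(e,e) = \sum_{\alpha \in \overline{Q}_1 e} I_\alpha - \sum_{\alpha \in e\overline{Q}_1} I_\alpha$ lies in $rad\, k(Q_1 \parallel Q_1) = \prod_\alpha k I_\alpha$, the image $Im\, D_0$ meets $R$ trivially and the decomposition descends to the quotient.

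Second, I would identify the solvable radical. Combining Lemma \ref{radqb}, the preceding lemma (which shows $Im\, D_0$ is an abelian ideal of $k(Q_1 \parallel B)$), and Lemma \ref{lemmaradical}, I get
\[
rad\, HH^1(A) = \frac{rad\, k(Q_1 \parallel Q_1) \oplus R}{Im\, D_0} \cong \Bigl( \prod_\alpha k I_\alpha \big/ Im\, D_0 \Bigr) \oplus R,
\]
and the dimension count used in Proposition \ref{resultadotecnico} (with $|\overline{Q}_1| - (|Q_0|-1) = \chi(\overline{Q})$) identifies the first summand with $k^{\chi(\overline{Q})}$. The semisimple quotient is then
\[
HH^1(A)/rad\, HH^1(A) \cong k(Q_1 \parallel Q_1)/rad\, k(Q_1 \parallel Q_1) \cong \prod_{\alpha \in S} sl_{|\alpha|}(k),
\]
since the factors $\gl_\alpha/kI_\alpha$ with $|\alpha|=1$ vanish. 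Levi's theorem then supplies the semidirect decomposition.

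Third, I would close by verifying the residual bracket relations. That $k^{\chi(\overline{Q})}$ is abelian is inherited from $\prod_\alpha k I_\alpha$; a short direct calculation gives $[I_\alpha, (y,\gamma)]_S = c\cdot (y,\gamma)$ for $l(\gamma)\geq 2$, yielding $[k^{\chi(\overline{Q})}, R]_S \subseteq R$; and $[R,R]_S \subseteq R$ is part of Lemma \ref{decoqb}. For the final claim $[sl_{|\alpha|}(k), k^{\chi(\overline{Q})} \oplus R]_S \subseteq R$, I would lift to representatives $(a,a') \in \gl_\alpha$: their bracket with any $I_\beta$ already vanishes inside $k(Q_1 \parallel Q_1)$, as in the proof of Lemma \ref{lemaqarbol}, while their bracket with $(y,\gamma) \in R$ equals $(y, \gamma^{(a,a')}) - \delta^y_{a'}(a, \gamma)$, both summands having second coordinate of length $l(\gamma) \geq 2$ and hence in $R$. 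The main obstacle is bookkeeping across the quotient by $Im\, D_0$: one must confirm that the semidirect structure is genuinely realized after descent. Since $Im\, D_0$ is contained entirely in $k(Q_1 \parallel Q_1)$, the inclusion $R \hookrightarrow k(Q_1 \parallel B)$ descends injectively into $HH^1(A)$ and lifts of the semisimple factor can be chosen within $k(Q_1 \parallel Q_1)$, so the combinatorial bracket computations above really capture the Lie structure on $HH^1(A)$.
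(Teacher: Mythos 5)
Your proposal is correct and follows essentially the same route as the paper: identify the radical as $(rad\,k(Q_1\parallel Q_1)\oplus R)/Im\,D_0 \cong k^{\chi(\overline{Q})}\oplus R$ via Lemmas \ref{radqb} and \ref{lemmaradical}, read off the semisimple quotient $\prod_{\alpha\in S} sl_{|\alpha|}(k)$, and apply Levi's theorem. Your explicit verification of $[\,sl_{|\alpha|}(k)\,,\,k^{\chi(\overline{Q})}\oplus R\,]_S\subseteq R$ and of the injectivity of $R$ into the quotient is more detailed than the paper's proof, which leaves these points implicit, but the substance is the same.
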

\begin{proof}
As we did for the radical square zero case, 
first we compute the semisimple part. So we have to compute the quotient of 
$k(Q_1 \parallel B) / Im \, D_0$ by its radical, which is 
$rad \, k(Q_1 \parallel B) / Im \, D_0$ using the above lemma. Clearly, the semisimple part is isomorphic to the quotient 
of $k(Q_1 \parallel B)$ by $rad \, k(Q_1 \parallel B)$. 
Recall that $k(Q_1 \parallel B)$ is equal to 
$k(Q_1 \parallel Q_1) \oplus R$ and 
$rad \, k(Q_1 \parallel B) $ is equal to 
${rad \, k(Q_1 \parallel Q_1) \oplus R}$. 
Therefore, 
the semisimple part of $HH^1(A)$ is 
$\prod_{\alpha \in S} sl_{|\alpha|} (k)$. 
To compute the radical of $HH^1(A)$, we have to compute the quotient of $rad \, k(Q_1 \parallel Q_1) \oplus R$ by $Im \, D_0$. Since $Im \, D_0$ is an abelian ideal of $k(Q_1 \parallel Q_1)$, 
then the radical is precisely 
$k \,^{\chi(\overline{Q})} \oplus R$. Using the Levi decomposition theorem we obtain the result.
\end{proof}

\begin{corollary}
Let $A$ be a triangular complete monomial algebra. Then  
$HH^1(A)$ is semisimple if and only if $\overline{Q}$ is a tree and $S$ is not an empty set.  
\end{corollary}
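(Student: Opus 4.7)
The plan is to apply Proposition \ref{triangular} directly. That proposition gives the decomposition
\[
HH^1(A) \cong \prod_{\alpha \in S} sl_{|\alpha|}(k) \;\rtimes\; \bigl(k^{\chi(\overline{Q})} \oplus R\bigr),
\]
in which $k^{\chi(\overline{Q})} \oplus R$ is the solvable radical and $\prod_{\alpha \in S} sl_{|\alpha|}(k)$ is the semisimple part. So the corollary reduces to reading off when both summands of the radical vanish while the semisimple factor is non-trivial.

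For the forward direction, if $HH^1(A)$ is semisimple then its solvable radical is zero, which forces $\chi(\overline{Q}) = 0$ and $R = 0$. Since $\overline{Q}$ is connected, $\chi(\overline{Q}) = 0$ is equivalent to $\overline{Q}$ being a tree. Moreover $S$ cannot be empty, as otherwise $HH^1(A)$ would itself be zero and therefore not semisimple in the convention used here.

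For the reverse direction, suppose $\overline{Q}$ is a tree and $S \neq \emptyset$. Then $\chi(\overline{Q}) = |\overline{Q}_1| - |Q_0| + 1 = 0$ at once. The step where I expect the genuine work to lie is in showing $R = 0$, that is, in ruling out the existence of any arrow $a \in Q_1$ parallel to a nonzero path $\gamma = y_1 \cdots y_n \in B$ with $n \geq 2$. Since $A$ is triangular, $Q$ has no oriented cycles, so such a $\gamma$ must visit pairwise distinct vertices. Passing to $\overline{Q}$, the sequence $\bar{y}_1, \ldots, \bar{y}_n$ is then a simple walk of length $n \geq 2$ between $s(a)$ and $t(a)$, while the edge $\bar{a}$ is another edge between the same two endpoints and is distinct from each $\bar{y}_i$ (otherwise $n$ would equal $1$). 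Concatenating these two walks yields a cycle in the underlying graph of $\overline{Q}$, contradicting the tree hypothesis. Hence $R = 0$, and $HH^1(A) \cong \prod_{\alpha \in S} sl_{|\alpha|}(k)$, which is semisimple because $S \neq \emptyset$. The only real obstacle is this $R = 0$ argument, where the interplay between acyclicity of $Q$ and the tree condition on $\overline{Q}$ must be exploited; everything else is immediate from Proposition \ref{triangular}.
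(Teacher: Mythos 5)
Your proof is correct and follows the same route the paper intends: the corollary is stated as an immediate consequence of Proposition \ref{triangular}, read off by asking when the radical $k^{\chi(\overline{Q})}\oplus R$ vanishes and the factor indexed by $S$ is non-trivial. The one point the paper leaves implicit --- that $R=0$ when $\overline{Q}$ is a tree --- you verify correctly via the no-oriented-cycles argument; the paper instead covers it (without the triangularity hypothesis) by the lemma in Section 3 stating that parallel paths in a quiver with $\overline{Q}$ a tree have equal length, so an arrow cannot be parallel to a path of length at least two.
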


\section{Semisimplicity and vanishing Hochschild cohomology.}

In \cite{strametz}, Strametz gave necessary and
sufficient conditions for the semisimplicity of the Lie algebra $HH^1(A)$ 
when $A$ is a monomial algebra. 
If we assume that the field is algebraically closed and of characteristic zero,  
her theorem states that $HH^1(A)$ is semisimple 
if and only if the following conditions are satisfied: 
\begin{itemize}
\item[-] the underlying graph of the quiver $\overline{Q}$ is a tree, 
\item[-] there exists a non trivial class in $\overline{Q}_1$ and 
\item[-] the ideal $<Z>$ is completely saturated. 
\end{itemize}
The aim of this section is to prove the following result: 
let $A$ be a monomial algebra with 
$HH^1(A)$ semisimple then 
$HH^n(A)=0$ for all $n \geq 2$.
To do so we will use the above stated 
conditions for semisimplicity. 
We begin recalling  
the definition of completely saturated ideal, 
and we prove that under the assumption 
that the underlying graph of the quiver $\overline{Q}$ is a tree,
the completely saturated condition is equivalent to $Z$ being closed by parallel paths. 
Therefore, we are able to restate Strametz's conditions. 
Then we proceed as follows:  
we assume the above conditions in order to compute 
the complex from the Bardzell-Happel resolution. 
Then we  prove that the Hochschild cohomology groups are zero in degrees greater than or equal to two. 

\subsection{Completely saturated condition.}

\begin{definition}[Completely Saturated]
Let $a \parallel b$ be two parallel arrows. 
We say that $a$ and $b$ are equivalent if 
$p^{(a,b)}=0=p^{(b,a)}$ for all $p$ in $Z$. 
The ideal $<Z>$ is called {\it{completely saturated}} 
if all parallel arrows are equivalent.   
\end{definition}

The next lemma gives a technical condition to determine if an ideal generated by paths is completely saturated. 

\begin{lemma}\label{saturado}
An ideal $<Z>$ is completely saturated if and only if 
for any path $p$ in $Z$ the following condition is verified: 
for each arrow $p_i$ in the expression of $p$ and 
for any arrow  $a$ parallel to $p_i$, 
the path  $p \diamantei a$ is in $<Z>$. 
\end{lemma}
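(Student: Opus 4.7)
The plan is to unpack both directions directly from the definitions, with the crucial tool being the dichotomy that every path of $Q$ either lies in the basis $B$ or is divisible by some element of $Z$ and hence lies in $\langle Z\rangle$; equivalently, for any path $q$, the condition $\chi_{B}(q)=0$ is the same as $q\in\langle Z\rangle$. I would state this observation up front, since both implications hinge on it together with the fact that the elements of $B$ are linearly independent in $A$.

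For the ``only if'' direction, I start from a triple $(p,i,a)$ with $p=p_{1}\cdots p_{n}\in Z$ and $a$ parallel to the $i$-th arrow $p_{i}$. The case $a=p_{i}$ is immediate since $p\diamantei a=p\in Z\subseteq\langle Z\rangle$, so assume $a\ne p_{i}$. Applying the completely-saturated hypothesis to the parallel pair $(p_{i},a)$ yields
\[
p^{(p_{i},a)} \;=\; \sum_{j}\delta_{p_{j}}^{p_{i}}\,\chi_{B}(p\diamantej a)\,(p\diamantej a) \;=\; 0 \quad\text{in } A.
\]
Every surviving summand lies in $B$, and the basis $B$ is linearly independent in $A$, so each coefficient must vanish. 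At the index $j=i$ the Kronecker symbol equals $1$, which forces $\chi_{B}(p\diamantei a)=0$, i.e.\ $p\diamantei a\in\langle Z\rangle$.

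For the converse, assume the combinatorial condition and fix parallel arrows $a\parallel b$ together with $p=p_{1}\cdots p_{n}\in Z$. Writing
\[
p^{(a,b)} \;=\; \sum_{j}\delta_{p_{j}}^{a}\,\chi_{B}(p\diamantej b)\,(p\diamantej b),
\]
only indices with $p_{j}=a$ contribute, and for each such $j$ the arrow $b$ is parallel to $p_{j}$; by hypothesis $p\diamantej b\in\langle Z\rangle$, so $\chi_{B}(p\diamantej b)=0$ and every summand vanishes. Thus $p^{(a,b)}=0$, and by the symmetric argument $p^{(b,a)}=0$, which is precisely the complete saturation condition.

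The only step that warrants any care is the ``no-cancellation'' argument in the forward direction: one must genuinely invoke linear independence of $B$ inside $A$ to pass from the vanishing of the whole sum to the vanishing of each individual coefficient $\chi_{B}(p\diamantei a)$. The rest is a formal manipulation of the definitions of $p^{(a,\gamma)}$ and of $\langle Z\rangle$.
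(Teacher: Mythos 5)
Your proof is correct and follows essentially the same route as the paper's: both directions are formal unwindings of the formula for $p^{(a,b)}$ together with the observation that $\chi_B(q)=0$ is equivalent to $q$ being in $<Z>$. The one place where the paper is more careful is precisely the ``no-cancellation'' step you flag yourself: before concluding that every summand of $0=p^{(p_i,a)}$ vanishes, the paper first checks that for $a\ne p_i$ the summands $p\diamantej a$ with $p_j=p_i$ are pairwise \emph{distinct} paths (if $p\diamantei a=p\diamantej a$, comparing the arrows in positions $i$ and $j$ forces $i=j$ since $a\ne p_i$). Without that check, linear independence of $B$ only tells you that the multiplicity with which the basis element $p\diamantei a$ occurs is zero in $k$, which rules out $p\diamantei a\in B$ only when the characteristic does not divide that multiplicity; with the distinctness check the argument is characteristic-free, as the statement of the lemma requires. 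You should add that one line; everything else matches the paper's proof.
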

\begin{proof} 
Let $p=p_1 \cdots p_i \cdots p_n$ be a path in $Z$ and $a,b$ two parallel arrows. 
Recall that the element $p^{(a,b)}$ in $A$ is given as follows:
\[
p^{(a,b)}=\sum_{i=1}^n 
\delta_{p_i}^a \chi_B (p \diamantei b) p\diamantei b
\] 
where $\delta_{p_i}^{a}$ is the Kronecker delta 
and $\chi_B$ is the characteristic function. 
Remark that $p^{(a,a)}=0$ for all $a$ in $Q_1$.   
Moreover, suppose  
$a$ is not parallel to any arrow $p_i$, then $p \diamantei a=0$ for all $i$ 
and therefore $p^{(a,-)}=0$. 
Now, suppose $a \ne b$ and that 
$a$ is parallel to some arrow in the path $p$. 
In this case, we will show that all paths 
which are summands of $p^{(a,b)}$ are different. 
Let $i,j$ be two natural numbers from $1$ to $n$ 
such that $p_i=a=p_j$.  
If $p \diamantei b=p_1 \cdots a \cdots p_n=p \diamantej a$ 
then $i=j$ since $a \ne b$. 
Therefore all paths $p \diamantei b$ 
in the above sum are different.

($\Rightarrow$) 
Let $a$ be an arrow such that $a \parallel p_i$ for some $i$. 
If $a=p_i$ then $p \diamantei a=p$, which is clearly in $<Z>$. 
Suppose  $a \ne p_i$. 
By hypothesis, all pairs of parallel arrows  
are equivalent, in particular $(p_i,a)$. So   
$$0=p^{(p_i,a)}=\sum_{j=1}^n 
\delta_{p_j}^{p_i} \chi_B (p \diamantej a) p\diamantej a. $$ 
We have shown that all paths on the right side of the formula are different. 
Therefore $\chi_{B}(p \diamantej a) \: p \diamantej a=0$ for all $j$. 
We conclude that $p \diamantei a$ is in $<Z>$. 

($\Leftarrow$) Let $a \parallel b$, 
we will show that they are equivalent, 
i.e. $p^{(a,b)}=0=p^{(b,a)}$.  
Using the above formula, 
it is clear that we are only interested in  
arrows $p_i$ which are equal to $a$ or $b$. 
Consider these $p_i$'s, evidently $p_i$ is parallel to $a$ and to $b$.   
By hypothesis, the paths $p \diamantei b$ and 
$p \diamantei a$ are in $<Z>$ for all $p_i$ that is equal to $a$ or to $b$. 
Then $\chi_B(p \diamantei a)=0=\chi_B(p \diamantei b)$. 
Using the above formula we  deduce 
that $p^{(a,b)}=0$ and $p^{(b,a)}=0$.
\end{proof}

\begin{lemma}\label{acompletezsaturated}
Let $Q$ be a quiver and let $Z$ be a minimal set of paths of length at least two. 
If $A=kQ/<Z>$ is a complete monomial algebra then $<Z>$ is completely saturated. 
\end{lemma}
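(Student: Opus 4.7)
The plan is to use Lemma~\ref{saturado} as the bridge, reducing the statement to a purely combinatorial assertion about paths. By that lemma, to show $\langle Z\rangle$ is completely saturated it suffices to verify that for every $p = p_1\cdots p_n \in Z$, every index $i$, and every arrow $a$ parallel to $p_i$, the replacement path $p\diamantei a = p_1\cdots p_{i-1}\, a\, p_{i+1}\cdots p_n$ lies in $\langle Z\rangle$.

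First I would observe that $p \diamantei a$ is parallel to $p$: since $a$ and $p_i$ share source and target, the concatenation with the unchanged prefix $p_1\cdots p_{i-1}$ and suffix $p_{i+1}\cdots p_n$ composes correctly and yields a path with the same overall source and target as $p$. Moreover, $p\diamantei a$ has length $n \geq 2$ because $p$ has length at least two (recall $Z$ consists of paths of length $\geq 2$).

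The finishing step is then immediate from the hypothesis that $A$ is complete monomial: a path of length at least two that is parallel to an element of $\langle Z\rangle$ must itself lie in $\langle Z\rangle$. Since $p \in Z \subseteq \langle Z\rangle$ and $p\diamantei a$ is a path of length $\geq 2$ parallel to $p$, completeness forces $p\diamantei a \in \langle Z\rangle$, which is exactly the criterion Lemma~\ref{saturado} requires.

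There is essentially no obstacle here; the only point to check carefully is that the technical condition in Lemma~\ref{saturado} is phrased in terms of an arbitrary arrow $a$ parallel to $p_i$, not merely arrows occurring in some specific subset, so that the completeness hypothesis applies directly without any additional combinatorial argument.
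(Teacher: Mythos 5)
Your proposal is correct and follows exactly the same route as the paper: reduce to the criterion of Lemma~\ref{saturado}, note that $p \diamantei a$ is a path of length at least two parallel to $p \in Z \subseteq \langle Z\rangle$, and invoke the completeness hypothesis to conclude $p \diamantei a \in \langle Z\rangle$. Your write-up merely spells out the parallelism and length checks that the paper leaves implicit.
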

\begin{proof}
Let $p$ be in $Z$, $p_i$ an arrow in the expression of $p$, 
and $a$ an arrow parallel to $p_i$. Since $A$ is a complete monomial algebra and 
$p \parallel p \diamantei a$,  $p \diamantei a_i$ is in $<Z>$. 
By the above lemma we conclude that $<Z>$ is completely saturated. 
\end{proof}

\subsection{Restatement of semisimplicity conditions.}
The objective in this section is to prove 
that the condition of "completely saturated" 
can be replaced  by  "closed under parallel paths" or by $A$ being "complete monomial" 
under the assumption that the underlying graph of $\overline{Q}$ is a tree. 

\begin{definition}[Closed under parallel paths]
A set of paths $Z$ is called {\it{closed under parallel paths}} if 
and only if for any path $p$ in $Z$ the following condition is verified: 
if $q$ is a path parallel to $p$ then $q$ is in $Z$. 
\end{definition}

\begin{example}
Let $Q$ be the following quiver:
\[
\xymatrix{
\, & \cdot \ar[d]^b & \, \\
\cdot \ar[ur]^a \ar[r]_d & \cdot \ar[r]_c& \cdot  
}
\]
\begin{itemize}
\item[$1)$] If $Z=\{ cb \}$ then $Z$ is clearly closed under parallel paths. The algebra  
${A=kQ / <Z>}$ is not complete monomial  
since $cba$ is in $<Z>$ but $cd$ which is parallel to $cba$ is not in $<Z>$. 
\item[$2)$] If $Z=Q_2$ then $A=kQ/<Z>$ is a complete monomial algebra. The set $Z$ 
is not closed under parallel paths since $d \parallel ba$ but $d$ is not in $Z$. 
\item[$3)$] If $Z=\{ cd, cba \}$ then $Z$ is closed under parallel paths and 
${A=kQ/<Z>}$ is a complete monomial algebra. 
\item[$4)$] If $Z=\{ ba \}$ then $A=kQ/<Z>$ is not complete monomial since 
$cba$ is a path in $<Z>$ parallel to $cd$ which is not in $<Z>$.   
The set $Z$ is not closed under parallel paths.
\end{itemize}
\end{example}

\begin{lemma}\label{saturado2}
Let $Z$ be a minimal set of paths of length at least two. 
If the set $Z$ is closed under parallel paths, 
then the ideal $<Z>$ is completely saturated. 
\end{lemma}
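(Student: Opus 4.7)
The plan is to invoke the technical characterization of ``completely saturated'' provided by Lemma \ref{saturado}, which reduces the statement to a single combinatorial observation about the $\diamond_i$ operation. According to that lemma, it suffices to show that for every path $p=p_1 \cdots p_i \cdots p_n$ in $Z$, every index $i$, and every arrow $a$ parallel to $p_i$, the path $p \diamantei a$ belongs to $<Z>$.

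The key observation is that $p \diamantei a$ is itself parallel to $p$. Indeed, the $\diamond_i$ operation replaces the $i$-th arrow $p_i$ by the arrow $a$, and since $a \parallel p_i$ one has $s(a)=s(p_i)$ and $t(a)=t(p_i)$; this forces $s(p \diamantei a)=s(p)$ and $t(p \diamantei a)=t(p)$. Moreover, $p \diamantei a$ is a path of the same length as $p$, hence of length at least two.

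Having noticed this, the hypothesis that $Z$ is closed under parallel paths applies directly to the pair $(p, p \diamantei a)$: since $p \in Z$ and $p \diamantei a$ is a path parallel to $p$, we get $p \diamantei a \in Z$, and a fortiori $p \diamantei a \in <Z>$. This establishes the criterion of Lemma \ref{saturado}, so $<Z>$ is completely saturated.

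There is essentially no obstacle here, since everything reduces to matching definitions once Lemma \ref{saturado} is in hand; the minimality of $Z$ does not enter this direction of the argument (it is built into the background hypotheses on $Z$ but plays no active role in the implication). The proof will therefore be short, consisting of the verification that $p \diamantei a \parallel p$ followed by an invocation of the closure hypothesis and of Lemma \ref{saturado}.
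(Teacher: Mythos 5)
Your proof is correct and follows essentially the same route as the paper: both reduce the claim to the criterion of Lemma \ref{saturado}, observe that $p \diamantei a$ is parallel to $p$ because $a \parallel p_i$, and then invoke the closure of $Z$ under parallel paths. No further comment is needed.
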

\begin{proof}
Let   
$p$ be an element of $Z$ whose expression in arrows is 
$p_1 \cdots p_i \cdots p_n$.  
If an arrow $a$ is parallel to some $p_i$ then the path 
$p$ is clearly parallel to the path obtained 
by replacing $p_i$ with $a$, which is $p \diamantei a$. 
Since $Z$ is closed under parallel paths, 
then $p \diamantei a$ is in $<Z>$ for all $i$. 
\end{proof}

The converse of the above implication is not always true. 
For example, let $Z=Q_n$ where $n>1$, 
the ideal  $<Z>$ is completely saturated but 
$Z$ is not necessarily closed under parallel paths. 
For instance, let $Q$ be the quiver of the above example and 
let $Z=Q_2$. 
Notice that $<Z>$ is completely saturated and 
$Z$ is not closed under parallel paths. 
If the underlying graph of $\overline{Q}$ is a tree then the converse holds.  
Indeed, under this assumption, parallel paths are as follows. 

\begin{lemma} 
Assume that the underlying graph of 
$\overline{Q}$ is a tree. 
Let $\alpha=a_1 \cdots a_n$ 
and $\beta=b_1 \cdots b_m$ be 
parallel paths. Then $n=m$ and  $a_i \parallel b_i$.
\end{lemma}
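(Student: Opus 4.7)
The plan is to reduce the problem to $\overline{Q}$: project $\alpha$ and $\beta$ to paths $\overline{a_1}\cdots\overline{a_n}$ and $\overline{b_1}\cdots\overline{b_m}$ in $\overline{Q}$ (where $\overline{a_i}$ denotes the equivalence class of $a_i$), and show that any two parallel paths in $\overline{Q}$ coincide as sequences of arrows. Since $\overline{a_i}=\overline{b_i}$ is exactly $a_i\parallel b_i$, this delivers both the equality $n=m$ and the parallelism of corresponding arrows.

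The first observation is that in $\overline{Q}$ there is at most one arrow between any two vertices, regardless of direction. By construction $\overline{Q}$ has at most one arrow in each ordered direction; if there were both $u\to v$ and $v\to u$ in $\overline{Q}$, the underlying graph would contain two distinct edges between $u$ and $v$, producing a $2$-cycle and contradicting the tree hypothesis.

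The second step is to show that no arrow of $\overline{Q}$ can appear twice inside a directed path. If an arrow $e:u\to v$ occurred at two positions of a directed path $W$, the portion of $W$ between these two occurrences would be a directed path from $v$ back to $u$ in $\overline{Q}$; but removing the edge $\{u,v\}$ disconnects $u$ from $v$ in the underlying tree, so the underlying walk of this portion must traverse $\{u,v\}$ again, and by the first step the only arrow carrying this edge is $e$ itself. This produces a third occurrence of $e$, and iterating yields infinitely many occurrences of $e$ in a finite path, a contradiction. It follows that the underlying trace of any directed path in $\overline{Q}$ is an edge-simple trail in the tree. Since every trail between two fixed vertices of a tree is the unique simple undirected path between them, and each edge of that path lifts to a unique oriented arrow of $\overline{Q}$, there is at most one directed path in $\overline{Q}$ between any pair of vertices. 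Applied to the images of $\alpha$ and $\beta$, which share source and target, this forces the two images to coincide.

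The main obstacle is the step ruling out a repeated arrow in a directed path, because in a general tree a walk between two vertices can backtrack arbitrarily; the crucial ingredient is combining the directedness of paths in $\overline{Q}$ with the first observation (no oppositely-oriented arrows between any two vertices), which is exactly what prevents such backtracking.
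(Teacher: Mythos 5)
Your proof is correct and follows the same route as the paper's: project both paths to $\overline{Q}$ and use that two parallel directed paths in a quiver whose underlying graph is a tree must coincide, which gives $n=m$ and $\overline{a_i}=\overline{b_i}$, i.e.\ $a_i\parallel b_i$. The only difference is that you spell out the elementary graph-theoretic verification (uniqueness of the arrow over each edge of $\overline{Q}$ and edge-simplicity of directed paths) that the paper's two-line proof leaves implicit.
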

\begin{proof} 
A pair of parallel paths in $Q$ provides a pair of parallel paths in 
$\overline{Q}$. Since $\overline{Q}$ is a tree, the former are equal, hence the original paths only differ by parallel arrows.
\end{proof}

The following proposition allows us to restate Strametz's theorem by replacing "completely saturated" by "closed under parallel arrows". 

\begin{proposition}\label{cerradoparalelos}
Let $Q$ be a quiver and 
$Z$ be a minimal set of paths of length at least two. 
Assume that the underlying graph of $\overline{Q}$ is a tree. 
If $<Z>$ is a completely saturated ideal then $Z$ is closed under parallel paths.
\end{proposition}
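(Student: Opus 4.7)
The plan is to reduce the statement to a stepwise arrow replacement, exploiting the rigid structure of parallel paths on a tree together with the minimality of $Z$ and the technical form of saturation provided by Lemma \ref{saturado}. Fix $p \in Z$ and a path $q$ parallel to $p$. By the preceding lemma, both paths have the same length $n$, and writing $p = a_1 \cdots a_n$ and $q = b_1 \cdots b_n$, we have $a_i \parallel b_i$ for every $i$. The goal is to show $q \in Z$.

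I would introduce the interpolating paths $p^{(0)} = p$ and $p^{(j)} = b_1 \cdots b_j \, a_{j+1} \cdots a_n$ for $1 \leq j \leq n$, so that $p^{(n)} = q$, and prove by induction on $j$ that $p^{(j)} \in Z$. The base case $j=0$ is given. For the inductive step, I would apply Lemma \ref{saturado} to $p^{(j)} \in Z$ at position $j+1$ with the arrow $b_{j+1}$ parallel to $a_{j+1}$: this yields $p^{(j+1)} = p^{(j)} \diamantej[j+1] b_{j+1} \in <Z>$. Consequently there exists $z \in Z$ that divides the path $p^{(j+1)}$, and it remains to argue that $z = p^{(j+1)}$.

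The crucial step is a case analysis on the location of $z$ inside $p^{(j+1)}$. If $z$ does not cover the arrow sitting at position $j+1$, then $z$ is a substring that lies entirely in positions $\{1,\dots,j\}$ or entirely in positions $\{j+2,\dots,n\}$; in either case $z$ is also a substring of $p^{(j)}$ and has length strictly smaller than $n$, contradicting the minimality of $Z$, since $p^{(j)} \in Z$ by induction. If instead $z$ covers position $j+1$, write $z = b_s \cdots b_j \, b_{j+1} \, a_{j+2} \cdots a_k$ for some $1 \leq s \leq j+1$ and $j+1 \leq k \leq n$. Assume for contradiction that $z \neq p^{(j+1)}$, so $z$ has length $< n$. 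I would apply Lemma \ref{saturado} once more, this time to $z$ at the position of $b_{j+1}$, with the parallel arrow $a_{j+1}$; this places $b_s \cdots b_j \, a_{j+1} \, a_{j+2} \cdots a_k$ in $<Z>$. But this replaced path is a substring of $p^{(j)} \in Z$ of length $< n$, so a generator of $<Z>$ dividing it would be a strict divisor of $p^{(j)}$ inside $Z$, contradicting minimality. Hence $z = p^{(j+1)}$ and $p^{(j+1)} \in Z$.

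I expect the main obstacle to be precisely this last subcase: one has to iterate the saturation condition so that a hypothetical short generator $z$ of $<Z>$ hitting $p^{(j+1)}$ can be translated, via a single parallel arrow swap, into a short path dividing $p^{(j)}$, which then collides with the minimality assumption. The tree hypothesis enters silently but essentially here, because it is what guarantees that the swap between $a_{j+1}$ and $b_{j+1}$ is the only available type of modification; without it, parallel paths could have different lengths and the inductive scheme would break down.
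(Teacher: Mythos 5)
Your proof is correct, but the key step is genuinely different from the one in the paper. Both arguments begin identically: the tree hypothesis (via the preceding lemma) lets you write $q$ as an arrow-by-arrow parallel deformation of $p$, and the chain $p^{(0)},\dots,p^{(n)}$ reduces everything to showing that a single parallel-arrow swap applied to an element of $Z$ lands in $Z$ itself, not merely in $\langle Z\rangle$. Where you diverge is in how you rule out a proper generator $z\in Z$ strictly dividing the swapped path. The paper proves the single-swap claim by induction on the length of $p$: if the witness $p'$ with $p\,\underset{i}{\diamond}\,a=\alpha p'\beta$ were shorter than $p$, it would be parallel to a strict subpath of $p$, and the induction hypothesis for shorter lengths forces that subpath into $Z$, contradicting minimality. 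You instead apply the saturation hypothesis (Lemma \ref{saturado}) a \emph{second} time, to the hypothetical short generator $z$ itself, swapping $b_{j+1}$ back to $a_{j+1}$; this yields an element of $\langle Z\rangle$ that strictly divides $p^{(j)}\in Z$ outright, and minimality kills it with no induction on length. Your route is the more elementary of the two: it runs on a single induction over positions, handles separately (and trivially) the case where $z$ misses the swapped arrow, and avoids the paper's somewhat delicate appeal to the statement being proved for shorter paths in order to convert ``parallel to an element of $Z$'' into ``belongs to $Z$''. The price is one extra invocation of saturation; what it buys is a cleaner logical structure. Both arguments use the tree hypothesis only once, through the decomposition of parallel paths into parallel arrows.
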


\begin{proof}
Let $p=p_1 \cdots p_n$ be a path in $Z$
and $q$ be a path parallel to $p$. 
Using the above lemma, 
$q=q_1 \cdots q_n$ and  
$q_i \parallel p_i$ for all $i$.
Since we suppose 
$<Z>$ is completely saturated, 
for each $i$ from $1$ to $n$ and 
$a$ parallel to $p_i$, the path 
$p \diamantei a$ is in the ideal 
$<Z>$ (Lemma \ref{saturado}). 
Let us notice that it is enough 
to prove that for any path $p$ in $Z$, 
the path $p \diamantei a$ is in $Z$.  
Once we have shown this, 
we can set $p(0)=p$ 
and for $i=1,\dots,n$ 
we can set $p(i)=p(i-1) \diamantei q_i$. 
Then each $p(i)$ will be in $Z$ and in particular $p(n)=q$. 
So let us prove that for any path $p$ in $Z$, 
and for all $a\parallel p_i$, the path   
$p \diamantei a$ is in $Z$:  
by induction on $l(p)$, 
the length of the path $p$. 
Let us suppose $l(p)=2$, 
then $p=p_1p_2$, and 
let $a \parallel p_i$. 
Since $<Z>$ is completely saturated, 
$p \diamantei a$ is in $<Z>$. 
This means that 
$p \diamantei a=\alpha p' \beta$ 
where $p'$ is in $Z$, $\alpha$ and $\beta$ are paths. 
Then it is easy to see that $\alpha$ and $\beta$ are trivial paths otherwise $p'$ is in $Q_1$, 
which is not possible since $Z \cap Q_1=\o$. 
Now, let us suppose $l(p)= n > 2$.  
Let $p=p_1\cdots p_n$ be in $Z$ and let $a \parallel p_i$. 
Since $<Z>$ is completely saturated, 
then $p \diamantei a$ is in $<Z>$, therefore 
$p \diamantei a=\alpha p' \beta$ where $p'$ is in $Z$. 
We have that $p'$ is parallel to 
$p_{j_1} \cdots p_{j_n}$ since the underlying graph of 
$\overline{Q}$ is a tree. 
If $\alpha$ or $\beta$ are non trivial paths, 
then $l(p') < n$. 
By the induction hypothesis  
$p_{j_1} \cdots p_{j_n}$ is in $Z$ since 
it is parallel to $p'$ which is in $Z$. 
But $Z$ is minimal so 
this is a contradiction. 
Therefore, $\alpha$ and $\beta$ are trivial paths. 
We thus obtain that $p \diamantei a$ is in $Z$. 
\end{proof}

\begin{corollary} 
Let $A=kQ/<Z>$ where 
$kQ$ is the path algebra of the quiver $Q$ and $Z$ is a minimal set of paths of length at least two. 
Then, $HH^1(A)$ is semisimple if and only if 
$Z$ is closed by parallel arrows, the underlying graph of $\overline{Q}$ is a tree and the set $S$ is not empty. 
\end{corollary}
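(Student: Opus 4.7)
The plan is to reduce the Corollary to Strametz's theorem by replacing the "completely saturated" condition by "closed under parallel paths". Strametz's criterion already states that, under the assumption the field is algebraically closed of characteristic zero, $HH^1(A)$ is semisimple if and only if the underlying graph of $\overline{Q}$ is a tree, $S \neq \emptyset$, and the ideal $\langle Z \rangle$ is completely saturated. So two out of three conditions are already in the desired form, and the entire work is a logical equivalence between "$\langle Z \rangle$ completely saturated" and "$Z$ closed under parallel paths" \emph{given that $\overline{Q}$ is a tree}.

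The implication "$Z$ closed under parallel paths $\Rightarrow \langle Z \rangle$ completely saturated" holds in complete generality (no tree hypothesis needed); it is exactly Lemma \ref{saturado2}. So I would simply invoke that lemma for one direction.

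For the converse "$\langle Z \rangle$ completely saturated $\Rightarrow Z$ closed under parallel paths", the tree assumption on $\overline{Q}$ is essential, and this is precisely the content of Proposition \ref{cerradoparalelos}. Under the tree assumption one exploits the rigidity given by the preceding lemma: two parallel paths must have the same length and arrow-by-arrow parallel components; combined with Lemma \ref{saturado}, a single parallel replacement of one arrow at a time produces paths in $\langle Z \rangle$, and the minimality of $Z$ then forces them into $Z$ itself. I would just quote Proposition \ref{cerradoparalelos} to provide this direction.

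Putting the two together: under the hypothesis that $\overline{Q}$ is a tree, the conditions "$\langle Z \rangle$ completely saturated" and "$Z$ closed under parallel paths" are equivalent. Substituting this equivalence into Strametz's theorem yields exactly the statement of the Corollary. The only subtlety to be careful about is that the equivalence of the two saturation conditions does require the tree hypothesis for one direction, but since "tree" is itself one of the three conditions in Strametz's criterion, this introduces no circularity: we are free to assume $\overline{Q}$ is a tree throughout the translation. No step is a serious obstacle — the Corollary is essentially a bookkeeping consequence of Lemma \ref{saturado2} and Proposition \ref{cerradoparalelos} applied to Strametz's theorem.
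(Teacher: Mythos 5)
Your proposal is correct and is exactly the argument the paper intends: the corollary is the restatement of Strametz's criterion obtained by substituting the equivalence of ``completely saturated'' and ``closed under parallel paths'' under the tree hypothesis, with one direction given by Lemma \ref{saturado2} and the converse by Proposition \ref{cerradoparalelos}. Your remark that the tree hypothesis may be assumed throughout without circularity, since it is itself one of Strametz's conditions, is the right observation.
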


We also have following result.

\begin{proposition}
Let $Q$ be a quiver and 
$Z$ be a minimal set of paths of length at least two. 
Assume that the underlying graph of $\overline{Q}$ is a tree. 
The following statements are equivalent:
\begin{itemize}
\item[$(1)$] $A=kQ/<Z>$ is a complete monomial algebra.
\item[$(2)$] $<Z>$ is a completely saturated ideal. 
\item[$(3)$] $\psi_1=0$
\item[$(4)$] $Z$ is closed under parallel paths.
\end{itemize}
\end{proposition}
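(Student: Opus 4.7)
My plan is to establish the circle of implications $(1)\Rightarrow(2)\Rightarrow(4)\Rightarrow(1)$ and then add the side equivalence $(2)\Leftrightarrow(3)$. Several of these implications are already available in the previous results of this subsection: $(1)\Rightarrow(2)$ is Lemma~\ref{acompletezsaturated} (no tree hypothesis is required); $(2)\Rightarrow(4)$ is Proposition~\ref{cerradoparalelos}, which crucially invokes the tree hypothesis; and $(4)\Rightarrow(2)$ is Lemma~\ref{saturado2}. Only $(4)\Rightarrow(1)$ and $(2)\Leftrightarrow(3)$ remain to be proved, and both rely on the tree hypothesis on $\overline{Q}$.

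For $(4)\Rightarrow(1)$ I would argue as follows. Take any path $p'$ of length at least two parallel to some $p\in\langle Z\rangle$. Writing $p=\alpha q\beta$ with $q\in Z$, apply the lemma stated immediately before Proposition~\ref{cerradoparalelos} to conclude that $l(p')=l(p)$ and that the arrows of $p'$ are position-wise parallel to those of $p$. The consecutive subpath $q'$ of $p'$ sitting in the positions occupied by $q$ is then well-defined and parallel to $q$, hence $q'\in Z$ by assumption $(4)$. Therefore $p'$ contains a subpath in $Z$, i.e.\ $p'\in\langle Z\rangle$, and $A$ is complete monomial.

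For $(2)\Leftrightarrow(3)$ the key preliminary observation is that the tree hypothesis forces $Q_1\parallel B=Q_1\parallel Q_1$: a tree has no loops, and any element of $B$ parallel to an arrow must itself be an arrow because all parallel paths in a tree share the same length. Thus the second coordinate $\gamma$ in $\psi_1(a,\gamma)=\sum_{p\in Z}(p,p^{(a,\gamma)})$ is always an arrow. For $(2)\Rightarrow(3)$, Lemma~\ref{saturado} gives $p\diamantei\gamma\in\langle Z\rangle$ whenever $p_i=a$, so $\chi_B(p\diamantei\gamma)=0$ and $p^{(a,\gamma)}=0$, making $\psi_1(a,\gamma)=0$. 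Conversely, for $(3)\Rightarrow(2)$, pick $p\in Z$, an arrow $p_i$ of $p$, and $b\parallel p_i$; evaluating $\psi_1(p_i,b)=0$ and projecting onto the $p$-component gives
\[
\sum_{j\,:\,p_j=p_i}\chi_B(p\diamantej b)\,(p\diamantej b)=0 \quad\text{in }A.
\]
When $b\neq p_i$, distinct substitution positions produce distinct paths of $Q$, so the non-zero summands are distinct basis elements of $A$ and each $p\diamantej b$ must individually lie in $\langle Z\rangle$; when $b=p_i$ the assertion $p\diamantei b=p\in\langle Z\rangle$ is immediate. In either case $p\diamantei b\in\langle Z\rangle$, and Lemma~\ref{saturado} converts this into the completely saturated condition $(2)$.

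The only slightly delicate point is the linear-independence step in $(3)\Rightarrow(2)$, namely that different positions of the substitution yield different paths so that vanishing of the sum in $A$ forces term-by-term vanishing; once this is noted, the whole proposition assembles by routine application of the earlier lemmas of this subsection together with the characterization of parallel paths in trees.
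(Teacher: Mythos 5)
Your proof is correct and uses essentially the same ingredients as the paper's: Lemma~\ref{acompletezsaturated} for $(1)\Rightarrow(2)$, Proposition~\ref{cerradoparalelos} together with the tree lemma on position-wise parallel arrows, the identification $Q_1\parallel B=Q_1\parallel Q_1$ for the $\psi_1$ computation, and the distinctness of the substituted paths to pass from the vanishing of $p^{(a,b)}$ to $p\diamantei b\in\langle Z\rangle$; the paper merely arranges these as the single cycle $(1)\Rightarrow(2)\Rightarrow(3)\Rightarrow(4)\Rightarrow(1)$ (its step $(3)\Rightarrow(4)$ is exactly your $(3)\Rightarrow(2)$ followed by Proposition~\ref{cerradoparalelos}), while you run $(1)\Rightarrow(2)\Rightarrow(4)\Rightarrow(1)$ with $(2)\Leftrightarrow(3)$ on the side. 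The rearrangement is cosmetic, and your explicit justification of the term-by-term vanishing in $(3)\Rightarrow(2)$ is a point the paper leaves implicit.
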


\begin{proof}
(1 $\Rightarrow$ 2) see Lemma \ref{acompletezsaturated}.

(2 $\Rightarrow$ 3) from the formula given by (\ref{psiuno}) and under the hypothesis $\overline{Q}$ is a tree, 
the map $\psi_1$ is defined from $k(Q_1 \parallel Q_1)$ to $k(Z \parallel B)$. 
Since $Z$ is completely saturated, $\psi_1(a,b)=0$ for all $(a,b)$ in $Q_1 \parallel Q_1$. 

(3 $\Rightarrow$ 4) Let $p=p_1 \cdots p_n$ be in $Z$ 
and let $a$ be an arrow parallel to $p_i$.  
First, let us remark the following: 
since $\psi_1(p_i,a_i)=0$, 
for any $q$ in $Z$, 
$q^{(p_i,a_i)}=0$. 
In the particular case of $q=p$, 
$p^{(p_i,a_i)}=0$  which implies that  
$p \diamantei a_i$ is in $<Z>$. 
Therefore, $<Z>$ is completely saturated since  
the condition of Lemma \ref{saturado} is satisfied. 
Since the underlying graph of 
$\overline{Q}$ is a tree and $Z$ is completely saturated, 
$Z$ is closed under parallel paths using the 
Proposition \ref{cerradoparalelos}.  

(4 $\Rightarrow$ 1) 
Let $p$ be a path in $<Z>$ and $q$ be a path parallel to $p$. 
Assume $p=\alpha p' \beta$ where $p'$ is in $Z$ and 
$\alpha$ and $\beta$ are paths.  
Since the underlying graph of $\overline{Q}$ is a tree, 
$q=\alpha' q' \beta'$ where 
$\alpha \parallel \alpha'$, 
$\beta \parallel \beta'$ 
and  $p' \parallel q'$. 
By hypothesis, $q'$ is in $Z$, therefore $q$ is in $<Z>$. 
\end{proof}
 
\begin{corollary} Let $A$ be a monomial algebra. 
If $HH^1(A)$ is semisimple then $A$ is a complete monomial algebra.  
\end{corollary}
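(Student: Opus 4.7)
The plan is to obtain this corollary as a direct consequence of the preceding proposition, which establishes the equivalence of four conditions (including being complete monomial and having $\langle Z\rangle$ completely saturated) under the hypothesis that the underlying graph of $\overline{Q}$ is a tree. So the task reduces to verifying that when $HH^1(A)$ is semisimple, the hypotheses of that proposition are met and one of its equivalent conditions holds.

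First I would invoke Strametz's theorem in the form restated at the start of Section 3: if $HH^1(A)$ is semisimple, then the underlying graph of $\overline{Q}$ is a tree, there exists a non-trivial class in $\overline{Q}_1$ (i.e.\ $S\neq\emptyset$), and the ideal $\langle Z\rangle$ is completely saturated. In particular, the two conditions needed to apply the preceding proposition are in place: the quiver $\overline{Q}$ has tree underlying graph, and $\langle Z\rangle$ is completely saturated, which is condition $(2)$ in that proposition.

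Then I would apply the implication $(2)\Rightarrow (1)$ from the previous proposition to conclude that $A=kQ/\langle Z\rangle$ is a complete monomial algebra. Since that proposition has already been proved and all of its hypotheses have been verified, the corollary follows immediately.

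There is no real obstacle here, as all the substantive content has been established earlier: Lemma \ref{acompletezsaturated} and Lemma \ref{saturado2} together with Proposition \ref{cerradoparalelos} are the ingredients feeding into the proposition, and Strametz's characterization supplies the needed starting information. The only thing worth being careful about is to state the argument in the right logical direction, namely that semisimplicity gives both the tree condition (so that the equivalences of the proposition apply) and the complete saturation condition (which is then translated into completeness via $(2)\Rightarrow (1)$).
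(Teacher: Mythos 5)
Your proposal is correct and matches the paper's (implicit) argument exactly: the corollary is stated immediately after the four-way equivalence, and the intended proof is precisely to read off from Strametz's restated theorem that $\overline{Q}$ has tree underlying graph and $\langle Z\rangle$ is completely saturated, then apply the equivalence $(2)\Leftrightarrow(1)$. No issues.
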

\subsection{Vanishing of the Hochschild cohomology.}
In the sequel, we will assume that the characteristic of the field is zero. 
Let $A$ be a finite dimensional monomial algebra. 
In this section, 
we prove that if $HH^1(A)$ is
semisimple   
then the Hochschild cohomology groups vanish in higher degrees. 
The principal tool is the 
Happel-Bardzell projective resolution. 
Let us recall some facts about this resolution. 

In \cite{happel}, Happel provides the projectives for a minimal projective resolution of a finite dimensional 
$k$-algebra over its enveloping algebra. 
Then in \cite{Bardzell}, Bardzell describes the projective modules for monomial algebras in terms 
of the combinatorics of $A$ and he describes  the morphisms of the resolution. 
\begin{notation}
The Happel-Bardzell minimal projective resolution for monomial
algebras given in \cite{Bardzell} is denoted by:  
\[
\bold{B \,=}\quad
\cdots \rightarrow P_{n+1}  
\rightarrow \, P_n 
\rightarrow \, \cdots
P_2 \, 
\rightarrow \, P_1  \, 
\rightarrow \, P_0  \,                          
\stackrel{\mu}{\rightarrow}  
A \rightarrow \, 0.
\]
The projective modules and morphisms are given explicitly in terms
of the quiver and the set of paths $Z$. 
The construction is rather technical, we provide a sketch of it: 
\[
\begin{array}{ccc}
P_0&=&A \tensorQ A \\
P_1&=&A \tensorQ kQ_1 \tensorQ A \\
P_2&=&A \tensorQ kZ \tensorQ A \\
P_n&=&A \tensorQ  kAP_{n} \tensorQ A
\end{array}
\] 
where $AP_n$ is a set of paths constructed by induction: 
for $n>2$ and for each path $p$ in $Z$, 
an {\it{associated sequence}}, 
$(p,r_2,\cdots,r_{n-1})$, 
of $n-1$ paths in $Z$ is given, 
then for each associated sequence 
there is a path  
with $s(p)$ as source and $t(r_n)$ 
as target associated to it. Then $AP_n$ is the 
collection of all those paths. 
The definition of "associated sequence" 
and the construction of paths from 
this associated sequences  
is explicitly given in \cite{Bardzell}. 
For the purpose of this paper 
we just need the following property.
\end{notation}

\begin{lema}[\cite{Bardzell}]
Let $p^n \in AP_{n}$. The set 
\[
Sub(p^n)=\{ \, 
p^{n-1} \in  AP_{n-1} \, \mid \, p^{n-1} \text{ divides } p^n 
\, \}
\]
contains two paths $p_o^{n-1}$ and $p_t^{n-1}$ where 
$s(p_o^{n-1})=s(p^n)$ and  
$t(p_t^{n-1})=t(p^n)$. 
Furthermore, if $n$ is odd then $Sub(p^n)=\{ p_o^{n-1},p_t^{n-1}\}$
\end{lema}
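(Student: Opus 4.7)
The plan is to work directly from Bardzell's construction of $AP_n$ via associated sequences. Recall that an element $p^n \in AP_n$ is specified by an associated sequence $(r_1,r_2,\dots,r_{n-1})$ of elements of $Z$ that overlap in a prescribed way: consecutive relations $r_i$ and $r_{i+1}$ share a maximal common subpath that is forced by the combinatorics of $Q$ and $Z$, and the path $p^n$ itself is the concatenation of these overlapping relations running from $s(r_1)$ to $t(r_{n-1})$. Thus the first step is to unpack this definition carefully enough to see that $p^n$ canonically determines and is determined by its associated sequence.

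Next, I would exhibit the two distinguished subpaths. Let $p_o^{n-1}$ be the element of $AP_{n-1}$ determined by the truncated sequence $(r_1,\dots,r_{n-2})$; it has $s(p_o^{n-1})=s(r_1)=s(p^n)$ and is a left-initial subpath of $p^n$, so it divides $p^n$. Symmetrically, let $p_t^{n-1}$ be the element corresponding to $(r_2,\dots,r_{n-1})$; it has $t(p_t^{n-1})=t(r_{n-1})=t(p^n)$ and is a right-final subpath of $p^n$, hence also divides $p^n$. Both therefore lie in $Sub(p^n)$.

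The remaining claim, that for $n$ odd $Sub(p^n)=\{p_o^{n-1},p_t^{n-1}\}$, is the main obstacle. The key input I plan to use is the parity-sensitive overlap pattern built into the definition of associated sequences: for odd $n$ the lengths of the overlaps between consecutive $r_i$'s are forced in such a way that no ``interior'' subsequence $(r_i,\dots,r_{i+n-3})$ with $i\ne 1$ and $i+n-3\ne n-1$ can itself form an associated sequence producing a divisor of $p^n$. Concretely, I would suppose $q \in AP_{n-1}$ divides $p^n$, read off its associated sequence $(s_1,\dots,s_{n-2})$ as a subword of $(r_1,\dots,r_{n-1})$, and use the parity of $n$ to show that either $s_1=r_1$ (forcing $q=p_o^{n-1}$) or $s_{n-2}=r_{n-1}$ (forcing $q=p_t^{n-1}$), ruling out interior occurrences by a length-mismatch argument on the overlaps.

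The hard step will be this last parity-based rigidity claim: it relies on the precise description in \cite{Bardzell} of how the overlap length propagates along an associated sequence and how it depends on whether one is at an even or odd position. Once that rigidity is established, the rest of the lemma is a bookkeeping exercise on sources and targets. For the purposes of the present paper, we only need the existence of $p_o^{n-1}$ and $p_t^{n-1}$ together with the odd-$n$ uniqueness, so I would keep the argument focused on these two extremal divisors and refer to \cite{Bardzell} for the detailed combinatorics of the interior overlaps.
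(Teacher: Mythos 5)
The paper offers no proof of this statement at all: it is quoted directly from \cite{Bardzell} as an imported black box (the text explicitly says only the definition and construction ``is explicitly given in \cite{Bardzell}'' and that ``for the purpose of this paper we just need the following property''). So there is no internal argument to compare yours against; the only honest benchmark is whether your sketch would stand as a proof on its own.

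It would not, and the gap sits exactly where the content of the lemma lies. First, a structural point: Bardzell's associated sequences are built by a left-to-right greedy choice (each $r_{i+1}$ is selected among the relations overlapping $r_i$ so as to terminate as early as possible), not by taking ``maximal common subpaths,'' and the construction is not left-right symmetric. Consequently, while the initial truncation $(r_1,\dots,r_{n-2})$ does yield $p_o^{n-1}$, your ``symmetric'' production of $p_t^{n-1}$ from the tail $(r_2,\dots,r_{n-1})$ is not automatic: a tail subword of an associated sequence need not itself be the associated sequence of $r_2$, so the existence of the terminal divisor already requires an argument you have not given. Second, and more seriously, the uniqueness for odd $n$ --- the ``alternating syzygy behavior'' that is the entire point of Bardzell's paper --- is only asserted: you say the parity ``forces'' the absence of interior divisors and then defer ``the detailed combinatorics of the interior overlaps'' back to \cite{Bardzell}. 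Since that rigidity statement is the nontrivial claim of the lemma, what you have written is a plan for a proof rather than a proof. Given that the paper itself treats the lemma as a citation, the cleanest fix is to do the same; if you do want to prove it, the parity argument must be carried out explicitly from the greedy overlap construction, not invoked.
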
 

The above lemma will enable us to compute the complex obtained 
from the Happel-Bardzell projective resolution for monomial algebras whose first Hochschild cohomology group is semisimple.

\begin{lemma}
Let $A=kQ/<Z>$ be a monomial algebra. 
Assume 
$Z$ is closed under parallel paths and that 
the underlying graph of $\overline{Q}$ is a tree. 
Let $p^n$ be a path in $AP_n$. 
Then any path parallel to $p^n$ is in the ideal $<Z>$. 
\end{lemma}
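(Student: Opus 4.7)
The plan is to combine two structural ingredients: on the one hand, the tree hypothesis on $\overline{Q}$ forces any two parallel paths to share the same length and to be parallel arrow by arrow; on the other hand, any path in $AP_n$ contains a divisor lying in $Z$, which will be the ``anchor'' we parallel-transport.

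First, I would apply the lemma proved just before (valid under the hypothesis that the underlying graph of $\overline{Q}$ is a tree): if $q$ is any path parallel to $p^n$, and we write $p^n = p_1 \cdots p_l$ in arrows, then necessarily $q = q_1 \cdots q_l$ with $q_i \parallel p_i$ for every $i$. This rigidifies the comparison between $p^n$ and $q$ to the level of individual arrows.

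Second, I would show by induction on $n \geq 2$ that every $p^n \in AP_n$ is divisible by some element of $Z$. The base case $n=2$ is immediate since $AP_2 = Z$. For the induction step, the lemma from \cite{Bardzell} recalled just above the statement guarantees the existence of a path $p_o^{n-1} \in AP_{n-1}$ dividing $p^n$; by the inductive hypothesis $p_o^{n-1}$ is divisible by some $p' \in Z$, and by transitivity so is $p^n$. Hence we can write $p^n = \alpha\, p'\, \beta$ for some (possibly trivial) paths $\alpha, \beta$ and some $p' \in Z$.

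Third, I would use step one to cut $q$ at the positions corresponding to the start and end of $p'$ in $p^n$: this yields a decomposition $q = \alpha'\, q'\, \beta'$ where $q'$ has the same length as $p'$ and is arrowwise parallel to $p'$; in particular $q' \parallel p'$. Since $Z$ is closed under parallel paths, $q'$ lies in $Z$, so $q = \alpha' q' \beta'$ belongs to $\langle Z \rangle$, which is exactly what we wanted.

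The only point that requires care is the last step: one needs to be sure that the positional decomposition $p^n = \alpha\, p'\, \beta$ transfers to a decomposition $q = \alpha' q' \beta'$ with $q' \parallel p'$. This is the role of step one, and it relies crucially on the tree hypothesis; without it a path parallel to $p^n$ could have a different combinatorial shape and there would be no canonical ``slot'' in $q$ corresponding to $p'$ in $p^n$.
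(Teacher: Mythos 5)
Your proof is correct, and it reorganizes the paper's argument in a way worth noting. The paper proves the statement itself by induction on $n$: at each stage it uses Bardzell's lemma to write $p^n = L\,p_o^{n-1}$ with $p_o^{n-1} \in AP_{n-1}$, uses the tree hypothesis to split the parallel path as $L'p'$ with $p' \parallel p_o^{n-1}$, and invokes the inductive hypothesis on $p'$ -- so the parallelism transfer is performed at every level of the $AP_n$ hierarchy. You instead push the induction onto the weaker statement that every $p^n \in AP_n$ admits a divisor in $Z$ (which follows from $AP_2 = Z$ plus Bardzell's lemma and transitivity of divisibility), and then perform the parallelism transfer exactly once, cutting $q$ at the positions of that divisor via the arrowwise-parallelism lemma and applying closure of $Z$ under parallel paths. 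Both proofs rest on the same two ingredients (Bardzell's lemma and the rigidity of parallel paths when $\overline{Q}$ is a tree), but yours isolates the purely combinatorial fact $AP_n \subseteq \langle Z\rangle$ from the parallelism argument, which makes the role of each hypothesis slightly more transparent; the paper's version is marginally shorter because it never needs to locate the $Z$-divisor explicitly. Your closing remark about why the tree hypothesis is indispensable for transferring the positional decomposition is accurate and matches the paper's use of it.
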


\begin{proof}
The proof is by induction. 
For $n=2$ the statement is true since 
$Z$ is closed under parallel paths. 
Now, let us suppose $n > 2$. 
Let $p^n$ be a path in $AP_n$. 
By Bardzell's lemma, 
$p^n=Lp_o^{n-1}$ 
where $p_o^{n-1}$ is in $AP_{n-1}$ and $L$ is some path. 
If $\alpha$ is a parallel path to $p^n$  
then $\alpha=L'p'$ where 
$L' \parallel L$ and $p'\parallel p_o^{n-1}$ 
since $\alpha$ is obtained by replacing parallel arrows in $p^n$ 
since $\overline{Q}$ is a tree. 
By the inductive hypothesis, $p'$ is in $<Z>$ 
since it is parallel to a path in $AP_{n-1}$. 
Therefore $\alpha$ is in $<Z>$.    
\end{proof}

\begin{theorem}
Let $A=kQ/<Z>$ be a finite dimensional monomial algebra where  
$k$ is a field of characteristic zero.
If the underlying graph of $\overline{Q}$ is a tree and $Z$ is closed under parallel paths 
then 
\begin{itemize}
\item[-] $HH^0(A)=k$  
\item[-] $HH^1(A)= \prod_{\alpha \in S } sl_{|\alpha|} (k)$ and 
\item[-] $HH^n(A)=0$ for all $n\geq 2$.
\end{itemize}
\end{theorem}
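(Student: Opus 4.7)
The plan is to assemble the result from the three preceding subsections plus the lemma on parallel paths to elements of $AP_n$. First I would check that the hypotheses (tree plus closed under parallel paths) imply the standing hypotheses of the previous results. Since $\overline{Q}$ is a tree, its underlying graph has no cycles, so $Q$ has no oriented cycles and $A$ is triangular. Combining closedness of $Z$ with the tree hypothesis (via Proposition 3.6, i.e.\ the equivalence of the four conditions) we also get that $A$ is complete monomial; thus we are in the scope of Proposition 2.13.

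For $HH^0(A)$, I would observe that the tree hypothesis forces $k(Q_0 \parallel B) = k(Q_0 \parallel Q_0)$: any non-trivial path parallel to a vertex would be an oriented cycle, and an oriented cycle in $Q$ projects to a cycle in $\overline{Q}$. On this subspace $\psi_0$ coincides with $D_0$, so the connectedness argument of Lemma \ref{dimImD} identifies the kernel with the one-dimensional subspace generated by the unit, giving $HH^0(A) = k$.

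For $HH^1(A)$ I would apply Proposition \ref{triangular}. Two reductions make the formula collapse. The tree hypothesis gives $\chi(\overline{Q}) = 0$, so the abelian factor $k^{\chi(\overline{Q})}$ vanishes. The same hypothesis also forces the ideal $R$ to be zero: if $(x,\gamma) \in Q_1 \parallel (B \cap Q_i)$ with $i \geq 2$, then $\overline{x}$ and $\overline{\gamma}$ would be two distinct parallel paths in $\overline{Q}$, producing a cycle in its underlying graph, contradicting the tree hypothesis. Thus Proposition \ref{triangular} yields $HH^1(A) \cong \prod_{\alpha \in S} sl_{|\alpha|}(k)$.

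The main content is the vanishing for $n \geq 2$, and the key input is the lemma just above the theorem. I would argue that the cochain space in degree $n \geq 2$ of the Bardzell--Happel complex, which is $k(AP_n \parallel B)$, is itself zero. Indeed, fix $p^n \in AP_n$ and any path $q$ parallel to $p^n$: by the preceding lemma, $q \in \langle Z \rangle$, so $q$ is divided by an element of $Z$ and therefore $q \notin B$. Hence $AP_n \parallel B = \varnothing$ for every $n \geq 2$, so the cochain space vanishes and $HH^n(A) = 0$. The only genuine obstacle is the inductive lemma on $AP_n$, which is already granted; given that lemma the argument is a clean cochain-level observation, and the delicate point to get right is simply that ``parallel to some $p^n$'' exhausts every possible partner of $p^n$ in the cochain $k(AP_n \parallel B)$, since no shorter path can be parallel to $p^n$ in a tree.
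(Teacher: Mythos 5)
Your proposal is correct and follows essentially the same route as the paper: the lemma on paths parallel to elements of $AP_n$ forces $AP_n \parallel B=\varnothing$ for $n\geq 2$ (and the tree hypothesis reduces the degree $0$ and $1$ cochain spaces to $k(Q_0\parallel Q_0)$ and $k(Q_1\parallel Q_1)$), so the induced complex collapses and $HH^1(A)$ is read off from Proposition \ref{triangular}. You merely make explicit some steps the paper leaves implicit, namely that $R=0$ and $\chi(\overline{Q})=0$ under the tree hypothesis and the kernel computation for $HH^0$.
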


\begin{proof} 
Let us remark that $Z \parallel B$ is empty since $Z$ is closed
under parallel paths and the elements of $B$ form a basis of $A$. 
In general $AP_n \parallel B$ 
is empty for $n\geq2$ using the above lemma. 
The complex obtained after applying the functor $Hom_{A^e}(-,A)$ 
to the Happel-Bardzell resolution is isomorphic to the following complex:
\[
0 \rightarrow k(Q_0 \parallel Q_0) 
\stackrel{\psi_0}{\longrightarrow} k(Q_1 \parallel Q_1) 
\stackrel{\psi_1}{\longrightarrow} 0
\rightarrow 0 \rightarrow \cdots 
\, \rightarrow \, 0 \,\rightarrow \, \cdots 
\]
We deduce that $HH^0(A)=k$ and 
$HH^n(A)=0$ for $n \geq 2$. 
We use Proposition \ref{triangular} 
to describe the first Hochschild cohomology group. 
\end{proof}

Notice that  under the hypotheses of the above theorem, 
if $\overline{Q}=Q$ then $Q$ is a tree and $S$ is clearly empty. 
Therefore $HH^1(A)$ is zero, which is a result by Bardzell and Marcos \cite{bardzellmarcos}. 

Finally, we have the following result:
\begin{corollary}
Let $A=kQ/<Z>$ be a finite dimensional monomial algebra. 
If $HH^1(A)$ is semisimple then  
${HH^n(A)=0}$ for all $n \geq 2$. 
\end{corollary}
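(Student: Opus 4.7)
The plan is to combine the characterization of semisimplicity of $HH^1(A)$ restated earlier in the section with the vanishing theorem immediately preceding this corollary. The restated version of Strametz's theorem (the corollary following Proposition \ref{cerradoparalelos}) asserts that, for a monomial algebra $A = kQ/<Z>$ with $Z$ a minimal set of paths of length at least two, $HH^1(A)$ is semisimple if and only if three conditions hold simultaneously: the underlying graph of $\overline{Q}$ is a tree, the set $Z$ is closed under parallel paths, and the set $S$ of non-trivial parallel classes is non-empty.

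Therefore, from the hypothesis that $HH^1(A)$ is semisimple I would immediately extract the first two of those conditions: $\overline{Q}$ is a tree and $Z$ is closed under parallel paths. These are precisely the hypotheses under which the previous theorem of this subsection computes the full Hochschild cohomology and, in particular, establishes $HH^n(A) = 0$ for all $n \geq 2$. The condition $S \ne \emptyset$ is not needed for the vanishing statement; it is only responsible for ensuring that $HH^1(A)$ itself is non-zero and semisimple (rather than just reductive).

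So the proof collapses to a two-line invocation: apply the restatement of Strametz's theorem to deduce the tree condition on $\overline{Q}$ and closure of $Z$ under parallel paths, then apply the preceding theorem to conclude $HH^n(A) = 0$ for $n \geq 2$. There is essentially no obstacle here: the substantive work was already carried out in establishing the restatement of the semisimplicity criterion (via Proposition \ref{cerradoparalelos} and Lemma \ref{saturado2}) and in proving the vanishing theorem via Bardzell's lemma applied to the paths in $AP_n$. Since the hypothesis directly furnishes the combinatorial conditions required by the vanishing theorem, no further computation is necessary.
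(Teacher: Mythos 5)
Your proposal is correct and follows essentially the same route as the paper: deduce from semisimplicity of $HH^1(A)$ that $\overline{Q}$ is a tree and that $Z$ is closed under parallel paths (the paper phrases the second condition as $A$ being complete monomial, which is equivalent under the tree hypothesis by the equivalence proposition of this section), then invoke the preceding vanishing theorem. Your observation that $S \ne \emptyset$ plays no role in the vanishing conclusion is also accurate.
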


\begin{proof}
Since $HH^1(A)$ is semisimple then $A$ is complete monomial algebra and the underlying graph of $\overline{Q}$ is a tree. We apply the above theorem in order to obtain that $HH^n(A)=0$ for $n \geq2$.
\end{proof}

\bibliography{bibliographie}
\bibliographystyle{amsalpha}
\end{document}